\def\blfootnote{\xdef\@thefnmark{}\@footnotetext}
\newtheorem{theorem}{Theorem}[section]
\newtheorem*{theorem*}{Main Theorem}
\newtheorem{lemma}[theorem]{Lemma}
\newtheorem{proposition}[theorem]{Proposition}
\newtheorem{corollary}[theorem]{Corollary}
\newtheorem{question}[theorem]{Question}
\newtheorem{conjecture}[theorem]{Conjecture}
\theoremstyle{definition}
\newtheorem{definition}[theorem]{Definition}
\newtheorem{remark}[theorem]{Remark}
\newtheorem{example}[theorem]{Example}
\newtheorem*{definition*}{Definition}
\newcommand{\Z}{\mathbb Z}
\newcommand{\Q}{\mathbb Q}
\newcommand{\f}{\varphi}
\newcommand{\x}{\mathcal{X}}
\renewcommand{\geq}{\geqslant}
\renewcommand{\leq}{\leqslant}
\newcommand{\ed} {\end{document}}
\let\leq=\leqslant
\let\geq=\geqslant
\numberwithin{equation}{section}
\begin{document}
\title{Finite groups admitting a coprime automorphism satisfying an additional polynomial identity}

\author{W. A. Moens}
\address{Faculty of Mathematics, University of Vienna, Austria}
\email{Wolfgang.Moens@univie.ac.at}

\date{\today}

\keywords{Finite group; simple group; automorphism; Fitting height}
\subjclass[2010]{}

\begin{abstract} 
	It is known that a finite group with an automorphism $\f$ of coprime order has a soluble radical of $(|\f|,|C_G(\f)|)$-bounded Fitting height and index. We extend this classic result as follows. Let $f(x) = a_0 + a_1 \cdot x + \cdots + a_d \cdot x^d \in \Z[x]$ be a primitive polynomial and let $G$ be a finite group with an automorphism $\f$ of coprime order satisfying $ g^{a_0} \cdot \f(g)^{a_1} \cdots \f^d(g)^{a_d} = 1 $, for all $g \in G$. % $\{ g^{a_0} \cdot \f(g)^{a_1} \cdots \f^d(g)^{a_d} = 1 | g \in G\} = \{1\}$. 
	Then the soluble radical of $G$ has $(d,|C_G(\f)|)$-boundex Fitting height and index. The bounds are made explicit and are particularly good for small values of the degree $d$.
\end{abstract}

\maketitle

%\tableofcontents

\section{Introduction}

We study the structure of finite groups $G$ with an automorphism $\f$ satisfying a given ordered identity $f(x)$. We recall that the polynomial $a_0 + a_1 \cdot x + a_2 \cdot x^2 + \cdots  + a_d \cdot x^d \in \Z[x]$ is said to be an ordered identity of $\f$ if every element $g$ in $G$ satisfies $$g^{a_0} \cdot \f(g)^{a_1} \cdot \f^2(g)^{a_2} \cdots \f^d(g)^{a_d} = 1.$$ 

The most obvious example of such an identity can be obtained as follows. Let $d := |\f|$ be the order of the automorphism. Then the polynomial $f(x) := -1 + x^d$ is clearly an ordered identity of $\f$, and much can be said about the structure of $G$ in terms of $d$ and $m := |C_G(\f)|$, the number of elements fixed by $\f$. For the sake of exposition, we make the additional assumption that the order of the automorphism is coprime to the order of the group. Then the automorphism is said to be \emph{coprime} and $G$ has a large soluble subgroup that can be obtained by few extensions of nilpotent groups. More precisely, the soluble radical of $G$ has $(d,m)$-bounded index in $G$ and $(d,m)$-bounded Fitting height. Such a bound on the index can be found in Hartley's generalization \cite{har} of the famous Brauer---Fowler theorem \cite{brau-fowl}, and a bound on the Fitting height appears in the early work of Thompson \cite{tho64}. We note that Thompson's bound, and later improvements on that bound by Kurzweil \cite{kur} and Turull \cite{tur0}, depend only on $k(d)$ (the number of prime divisors of $d$, counted with multiplicity) and on the Fitting height of $C_G(\f)$ (which is naturally bounded by $m$). Other relevant results are due to Brauer---Fong \cite{bra-fon}, Hartley---Meixner \cite{har-mei}, Hartley---Turau \cite{har-tur}, Pettet \cite{pet}, and Hartley---Isaacs \cite{har-isa}. Cf. Turull's survey \cite{tur94}. \\

Let us further specialize $m$ to $1$. Then the automorphism $\f$ is said to be \emph{fixed-point-free} and the classification of the finite simple groups implies that the group is soluble \cite{row}. In this case, the work of Schult \cite{shu},  Gross \cite{gro}, and Berger \cite{ber} gives the sharp upper bound $k(d)$ on the Fitting height of $G$. By further specializing $d$ to a prime, we force the group to be nilpotent. This is Thompson's celebrated solution \cite{tho59} of the Frobenius conjecture. Higman \cite{hig} showed that the nilpotency class of $G$ is then $d$-bounded, but he did not make the bound explicit. Kreknin and Kostrikin \cite{krek-kos} later found the explicit bound $d^{2^{d}}$. \\

All of these results for $f(x) = -1 + x^d$ and $m = 1$ have recently been generalized to non-zero polynomials $f(x) := a_0 + a_1 \cdot x + \cdots + a_d \cdot x^d \in \Z[x]$ satisfying $\gcd(a_0,a_1,\ldots,a_d) = 1$. Such a polynomial is said to be \emph{primitive}. In fact, consider any finite group $G$ with any fixed-point-free automorphisn $\f$ satisfying the primitive ordered identity $f(x)$. Then the Fitting height of $G$ is at most $2 + 112 \cdot d^2$. Moreover, there exist finitely-many primes $p_1,\ldots,p_l$, dependig \emph{only} on $f(x)$, with the following property. If $\gcd(|G|,p_1 \cdots p_l) = 1$, then the bound on the Fitting height of $G$ can be improved to $4 + (1+c)^2$, where $c$ is the number of irreducible factors of $f(x)$. These two theorems of \cite{khu-moe} depend on the deep results of Hall---Higman \cite{ha-hi}, Shult-Gross-Berger \cite{shu,gro,ber}, and Dade-Jabara \cite{dad,jab}. The upper bound can still be improved to the linear bound $c$ for almost-all polynomials $f(x)$ \cite{moe2}. If $f(x)$ is irreducible, then $G$ is nilpotent of $d$-bounded class \cite{moe0} at most $d^{2^d}$ \cite{moe1}.  \\

These recent generalizations required $m$ to be $1$ but they did not require the automorphism to be coprime. In contrast, we now consider coprime automorphisms but we do not require $m$ to be $1$. The results of Hartley \cite{har} and Thompson-Kurzweil-Turull \cite{tho64,kur,tur0} will then be generalized as follows.

\begin{theorem*} \label{t1}
	Let $G$ be a finite group with a coprime automorphism $\f$ that fixes $m$ elements and that satisfies the primitive ordered identity $f(x) = a_0 + a_1 \cdot x + \cdots + a_d \cdot x^d$:   
	$$g^{a_0} \cdot \f(g)^{a_1} \cdots \f^d(g)^{a_d} = 1,$$ for all $g \in G$. Then the soluble radical of $G$ has $(d,m)$-bounded index in $G$ and it has $(d,h_0)$-bounded Fitting height, where $h_0$ is the Fitting height of $C_G(\f)$.
\end{theorem*}

The bounds will be made explicit in Section \ref{s-main-theorem-proof}. In the same section, we will also show that the theorem is no longer true without the primitivity condition $\gcd(a_0,\ldots,a_d) = 1$. Whether the coprimeness condition $\gcd(|G|,|\f|) = 1$ can be removed is not even known in the ``classic case'' $f(x) = -1 + x^d$ (cf. Problem 13.8 in the Kourovka Notebook \cite{kour}). \\

The general strategy to prove the theorem is straight-forward. We begin by combining the recent results of \cite{khu-moe} with those of Turull \cite{tur0} in order to obtain an upper bound on the Fitting height of the soluble radical. This is done in Section \ref{s-soluble-case}. It then suffices to obtain an upper bound on the order of $G$ under the additional assumption that the soluble radical is the trivial group. In Section \ref{s-simple}, we do this for simple groups by means of the classification. In fact, we first prove that the automorphism has order at most $d$, and we then derive a suitable upper bound on the order of the group (without using Hartley's theorem \cite{har}). In Section \ref{s-semi-simple}, we treat the general case by reducing it to the simple case. 

\section{Definitions and examples}

An automorphism $\f$ of a finite group $G$ is \emph{coprime} if $\gcd(|G|,|\f|) = 1$. A polynomial $f(x) = a_0 + a_1 \cdot x + a_2 \cdot x^2 + \cdots + a_d \cdot x^d \in \Z[x]$ is \emph{primitive} if it is non-zero and if its \emph{content} $\gcd(a_0,a_1,a_2,\ldots,a_d)$ is $1$. The following notion was introduced by the second author.

\begin{definition}[Cf. \cite{moe1,moe2}] \label{d-ident}
	Let $f(x) = a_0 + a_1 \cdot x + a_2 \cdot x^2 + \cdots + a_d \cdot x^d \in \Z[x]$ be a polynomial. We say that $f(x)$ is an \emph{ordered identity} of the endomorphism $\gamma$ of the group $G$ if $$g^{a_0} \cdot \gamma(g)^{a_1} \cdot \gamma^2(g)^{a_2} \cdots \gamma^d(g)^d = 1,$$ for all $g \in G$. In this case, we also say that $\gamma$ \emph{satisfies} the ordered identity $f(x)$. More generally, we say that $f(x)$ is an \emph{identity} of the endomorphism $\gamma$ if there exist $b_0,b_1,b_2,\ldots,b_k \in \Z$ and $m_0,m_1,m_2,\ldots,m_k \in \Z_{\geq 0}$ such that $f(x) = b_0 \cdot x^{m_0} + b_1 \cdot x^{m_1} + b_2 \cdot x^{m_2} + \cdots + b_k \cdot x^{m_k}$ and such that $$\gamma^{m_0}(g)^{b_0} \cdot \gamma^{m_1}(g)^{b_1} \cdot \gamma^{m_2}(g)^{b_2} \cdot \gamma^{m_k}(g)^{b_k} = 1,$$ for all $g \in G$. In this case, we also say that $\gamma$ \emph{satisfies} the identity $f(x)$. 
\end{definition}

It is clear that every ordered identity of $\gamma$ is also an identity of $\gamma$, but examples show that the converse is not true in general. Some identities naturally correspond with work in the literature.

\begin{example} Let $n \in \Z_{\geq 1}$.
	\begin{enumerate}
		\item[\rm (a)] A finite group $G$ has exponent dividing $n$ if and only if the constant polynomial $f(x) := n$ is an ordered identity of some (any) automorphism. These groups have been studied extensively in the context of the restricted Burnside problem. We highlight the work of Hall---Higman \cite{ha-hi} and Zelmanov \cite{zel-rbp1,zel-rbp2}.
		\item[\rm (b)] A group is $n$-abelian if and only if it has an endomorphism $\gamma$ satisfying the linear ordered identity $f(x) := -n + x$. These groups were introduced by Baer \cite{bae} and classified by Alperin \cite{alp} (for $n > 1$).
		\item[\rm (c)] The automorphism $\f$ of the group $G$ has order dividing $n$ if and only if the polynomial $f(x) := -1 + x^n$ is an ordered identity of $\f$. Automorphisms with prescribed order have been studied extensively in the literature, as we had already observed: \cite{ber,brau-fowl,dad,gro,har,har-isa,har-mei,har-tur,hig,jab,krek-kos,pet,shu,tho59,tho64,tur0}.
		\item[\rm (d)] The automorphism $\f$ of the group $G$ is $n$-splitting if and only if the polynomial $f(x) := 1 + x + x^2 + \cdots + x^{n-1}$ is an ordered identity of $\f$. These $n$-splitting automorphisms have been studied in various contexts, including the Hughes subgroup problem and the compact Burnside problem. We mention \cite{ers,esp,hug-tho,jab94,jab96,keg,khu80,khu86,khu89,khu91,khu94,khu-mak,zel} and we refer to the references therein. We also highlight Zelmanov's powerful generalization \cite{zel17} of his solution of the compact Burnside problem to (a different kind of) polynomial identities.
	\end{enumerate} 
\end{example}

The identity in (a) is primitive if and only if $n = 1$. The identities in (b), (c), and (d) are all monic and therefore primitive. We refer to the introductions of \cite{khu-moe,moe0,moe1,moe2} for more examples and context.

\section{The soluble case} \label{s-soluble-case}

In this section, we obtain an upper bound on the Fitting height of the radical. %A bound that is $O(md^2)$ can be derived from Theorem 1.3 of \cite{khu-moe} in a straightforward manner. In order to obtain a better bound, we will proceed differently. \\
We first fix some notation. For a finite soluble group $G$, the \emph{Fitting subgroup} of $G$ is denoted by $F(G)$ and the \emph{Fitting height} of $G$ is denoted by $h(G)$. The composition length of a finite cyclic group $C$ is denoted by $k(C)$, and it coincides with the number $k(|C|)$ of prime divisors of $|C|$, counted with multiplicity. \\

The following result of Thompson \cite{tho64}, Kurzweil \cite{kur}, and Turull \cite{tur0} has already been mentioned in the introduction. The sharp bound in the theorem was obtained Turull.

\begin{theorem} \label{t-tho-kur-tur}
	Let $G$ be a finite soluble group with a soluble group of operators $A$ of coprime order and of composition length $k(A)$. Then $h(G) \leq 2k(A) + h(C_G(A))$.
\end{theorem}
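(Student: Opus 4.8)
The plan is to induct on the composition length $k(A)$, with the essential ingredient being a lemma that treats the case where $A$ is cyclic of prime order. The case $A = 1$ (so $k(A) = 0$) is trivial, since then $C_G(A) = G$. For $k(A) \ge 1$, I would choose a maximal normal subgroup $B \triangleleft A$; because $A$ is soluble, the quotient $A/B$ is cyclic of prime order and the composition length is additive, so $k(B) = k(A) - 1$. As $|B|$ divides $|A|$, the subgroup $B$ still acts coprimely on the soluble group $G$, and the induction hypothesis yields $h(G) \le 2 k(B) + h(C_G(B)) = 2(k(A)-1) + h(C_G(B))$. It then remains only to bound $h(C_G(B))$ in terms of $h(C_G(A))$.

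Here the standard coprime-action bookkeeping does the work. Since $B$ is normal in $A$, the centraliser $\bar G := C_G(B)$ is an $A$-invariant subgroup, and the induced action of $A/B$ on $\bar G$ is again coprime. An element of $\bar G$ is fixed by $A/B$ precisely when it is fixed by all of $A$, so $C_{\bar G}(A/B) = C_G(A)$. Applying the prime-order lemma to $A/B$ acting on $\bar G$ gives $h(C_G(B)) = h(\bar G) \le 2 + h(C_G(A))$. Substituting this into the previous inequality produces exactly $h(G) \le 2(k(A)-1) + 2 + h(C_G(A)) = 2 k(A) + h(C_G(A))$, which closes the induction.

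Thus everything reduces to the lemma: for $A = \langle \alpha \rangle$ cyclic of prime order $p$ coprime to $|G|$, one has $h(G) \le 2 + h(C_G(A))$. I would attack this by examining the upper Fitting series $1 = F_0 \le F_1 \le \cdots \le F_h = G$, whose terms are characteristic and hence $A$-invariant. Coprimeness supplies two levers: the decomposition $G = [G,A]\, C_G(A)$, and the persistence of fixed points in quotients, namely $C_{G/N}(A) = C_G(A)N/N$ for every $A$-invariant normal subgroup $N$. These let me track the image of $C_G(A)$ through successive Fitting factors and localise the problem to the action of $\alpha$ on the chief factors $F_i/F_{i-1}$, each of which is a completely reducible module over a field of characteristic coprime to $p$.

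The main obstacle is precisely this module-theoretic heart: one must show that a coprime automorphism of prime order cannot raise the Fitting height by more than $2$ above that of its fixed-point subgroup. The governing phenomenon is of Hall--Higman type, controlling how $\alpha$ can act nontrivially across consecutive Fitting layers without accumulating fixed points. A direct analysis by induction on $|G|$ via a minimal counterexample yields only a bound with a larger constant, as in the original versions of Thompson and Kurzweil; extracting the sharp constant $2$, and with it the clean additivity $2 k(A)$ along the composition series, is what demands Turull's refined representation-theoretic estimates. I expect essentially all of the genuine difficulty to reside in establishing this sharp prime-order lemma.
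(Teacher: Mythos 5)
The paper does not actually prove this statement: Theorem \ref{t-tho-kur-tur} is quoted from the literature, attributed to Thompson, Kurzweil and Turull, with the sharp constant $2$ per composition factor due to Turull \cite{tur0}. So there is no internal argument to measure your proposal against; the only question is whether your proposal is itself a proof.

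It is not, because of one concentrated gap. Your reduction of the general case to the cyclic-prime-order case is correct and standard: $B$ is $A$-invariantly chosen as a maximal normal subgroup, $C_G(B)$ is $A$-invariant precisely because $B\trianglelefteq A$, the action of $A$ on $C_G(B)$ factors through $A/B$, the identification $C_{C_G(B)}(A/B)=C_G(A)$ holds, and the constant $2$ accumulates additively, giving $2k(A)$. But the ``prime-order lemma'' you reduce to --- $h(G)\le 2+h(C_G(\alpha))$ for $\alpha$ of prime order acting coprimely on a soluble $G$ --- is exactly the case $k(A)=1$ of the theorem, and it carries the entire mathematical content. You do not establish it; you describe the shape of an argument (upper Fitting series, $G=[G,A]\,C_G(A)$, Hall--Higman-type control of consecutive layers) and then defer to ``Turull's refined representation-theoretic estimates,'' i.e.\ to the very result being proved. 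As you yourself anticipate, the elementary minimal-counterexample analysis yields only the weaker constants of Thompson and Kurzweil; the constant $2$ requires Turull's analysis of the length of $A$-towers in the sense of Dade, which he carries out directly for soluble $A$ rather than by isolating the prime-order case. So what you have is a correct but routine reduction plus a citation of the hard step, not a proof.
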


Our second auxiliary result is the following slight modification of Proposition 4.1 in Khukhro---Moens {\cite{khu-moe}}. We recall that, if $q$ is a prime divisor of the order of a group $G$, then $O_{q'}(G)$ is defined to be the largest normal $q'$-subgroup of $G$ and $O_{q',q}(G)$ is defined to be the inverse image of the largest normal $q$-subgroup of $G/O_{q'}(G)$.

\begin{proposition} \label{p-khu-moe}
	Let $G$ be a finite, non-trivial, soluble group and let $\f$ be a coprime automorphism that satisfies the primitive identity $f(x)$ of degree at most $d$. Let $q$ be any prime divisor of $G$ and define the quotients $\bar{G} := G/O_{q',q}(G)$ and $H := \bar{G}/F(\bar{G})$. Then the automorphism group induced by $\langle \f \rangle$ on $H$ has $d$-bounded order $| \langle \f_{|_{H}} \rangle | \leq (2d)^{(2d)}$ and $d$-bounded composition length $k(\langle |\f_{|_{H}}| \rangle) \leq 4 d$.
\end{proposition}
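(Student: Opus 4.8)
The plan is to reduce the assertion to a statement about a coprime \emph{linear} action and then exploit that the identity, being primitive of degree at most $d$, forces the automorphism to act with few eigenvalues on every abelian section.

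First I would make the module that $H$ acts on explicit. Write $\tilde G := G/O_{q'}(G)$; since $O_{q'}(\tilde G) = 1$, the Hall--Higman lemma \cite{ha-hi} gives $C_{\tilde G}(F(\tilde G)) \subseteq F(\tilde G)$, and $F(\tilde G) = O_q(\tilde G) =: P$ is a $q$-group. By construction $O_{q',q}(G)/O_{q'}(G) = P$, so $\bar G = \tilde G/P$ and $O_q(\bar G) = 1$. Using Burnside's basis theorem together with $O_q(\bar G)=1$, one checks that the kernel of the action of $\bar G$ on $V := P/\Phi(P)$ is a normal $q$-subgroup, hence trivial; thus $\bar G$ embeds into $GL(V)$ with $V$ an $\F_q$-module. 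The automorphism $\varphi$ stabilises the characteristic subgroups $O_{q'}(G)$ and $P$, so $\langle\varphi\rangle$ acts on this whole configuration, coprimely since $\gcd(|\varphi|,|G|)=1$.

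Second, I would linearise the identity. On the abelian section $V$ the identity $f(x) = \sum_i b_i \cdot x^{m_i}$ becomes the operator relation $\sum_i b_i \cdot \varphi^{m_i} = 0$, that is $f(\varphi) = 0$ on $V$. Coprimeness makes $\varphi_{|V}$ semisimple, and primitivity guarantees that $f \bmod q$ is a non-zero polynomial of degree at most $d$; hence $\varphi_{|V}$ has at most $d$ distinct eigenvalues over $\overline{\F_q}$, all of them roots of unity of order prime to $q$. I would also record the chain $C_{\langle\varphi\rangle}(V) \subseteq C_{\langle\varphi\rangle}(\bar G) \subseteq C_{\langle\varphi\rangle}(H)$, the first inclusion because $\bar G \hookrightarrow GL(V)$ and the second because $H$ is a quotient of $\bar G$; this already shows that $\langle\varphi_{|H}\rangle$ is a cyclic quotient of $\langle\varphi_{|V}\rangle$, so only its size is in question.

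The heart, and the main obstacle, is to bound $|\langle\varphi_{|H}\rangle|$ and its composition length \emph{independently of $q$}: the order of $\varphi_{|V}$ itself is not so bounded, since an eigenvalue of $\varphi_{|V}$ may have order as large as $q^e - 1$. The plan is to show that the $q$-dependent ``diagonal'' part of $\varphi$ is absorbed into $F(\bar G)$ and therefore acts trivially on $H = \bar G/F(\bar G)$, so that only a ``permutation part'' survives to $\mathrm{Aut}(H)$. Concretely, I would decompose $V$ into homogeneous components for the completely reducible (hence coprime) action of $F(\bar G)$ --- a $q'$-group by the first step --- and follow how $\langle\varphi\rangle$ permutes them; the at most $d$ eigenvalues of $\varphi_{|V}$ limit this to a permutation of at most $2d$ blocks, whose possible orders are bounded by $(2d)^{(2d)}$ and whose cycle structure contributes at most $4d$ to the composition length. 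This is the step that genuinely uses $H$ rather than $\bar G$ or $V$, and it is where I would follow and slightly adapt the argument of Proposition 4.1 in \cite{khu-moe}; verifying that the scalar part really lands inside $F(\bar G)$ is the delicate point, since it requires controlling the interaction between the $\langle\varphi\rangle$-eigenspace grading of $V$ and the normal structure of $\bar G$.
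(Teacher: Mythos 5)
Your setup (passing to $V := P/\Phi(P)$ with $P = O_{q',q}(G)/O_{q'}(G)$, embedding $\bar G$ into $GL(V)$ via Hall--Higman, and linearising the identity to $f(\varphi)=0$ on $V$ with at most $d$ eigenvalues) is consistent with how the underlying result is actually proved, and your observation that primitivity is exactly what makes $f \bmod q$ non-zero is one of the two hypothesis checks the paper performs. But there is a genuine gap at the point you yourself flag as the heart of the argument. You propose to ``follow and slightly adapt the argument of Proposition 4.1 in \cite{khu-moe}'' without identifying what the adaptation actually is: that proposition is stated and proved for \emph{fixed-point-free} automorphisms, whereas here $\varphi$ is merely coprime and may fix many elements. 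The paper's entire proof of this proposition consists of locating the unique place where fixed-point-freeness enters the argument of \cite{khu-moe} --- namely, to produce $\varphi$-invariant Hall subgroups of $G$ --- and observing that for a coprime automorphism of a finite soluble group the existence of an invariant Hall $\sigma$-subgroup is guaranteed anyway (the paper substitutes \cite[Remark 2.13]{jab} for \cite[Lemma 2.2]{khu-moe}). Without this observation your reduction does not close: you cannot invoke the cited argument verbatim, and nothing in your sketch replaces the role that fixed-point-freeness plays there. (The paper also records a second, minor hypothesis check that you handle only implicitly: $f(x)$ must be an identity of $\varphi$ on \emph{every} characteristic elementary abelian section, not just on $V$, to qualify as an ``elementary abelian identity'' in the sense of \cite{khu-moe}.)

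A secondary weakness: the quantitative core --- that the scalar part of $\varphi_{|V}$ lands in $F(\bar G)$ and that the surviving permutation part is confined to at most $2d$ blocks, yielding $|\langle\varphi_{|H}\rangle| \le (2d)^{(2d)}$ and $k(\langle\varphi_{|H}\rangle) \le 4d$ --- is asserted rather than derived; the constants are read off from the statement you are trying to prove. Since these specific bounds are precisely what Proposition 4.1 of \cite{khu-moe} supplies, the economical (and the paper's) route is to cite that proposition as a black box and verify its hypotheses, rather than to re-run its proof; if you do want to re-run it, the eigenvalue count on $V$ alone will not give you the block bound without the Clifford-theoretic bookkeeping carried out there.
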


\begin{proof}
	We consider the analogous Proposition 4.1 in \cite{khu-moe}, which is formulated for \emph{elementary abelian identities} that \emph{do not vanish modulo any prime} divisors of $|G|$, and for automorphisms that are \emph{fixed-point-free}. We first note that $f(x)$ is an identity of the automorphism induced by $\f$ on any characteristic, elementary abelian section of $G$, so that $f(x)$ is also an elementary abelian identity of $\f$ (by definition). We next note that $f(x)$ does not vanish modulo any prime divisors of $|G|$ since $f(x)$ is assumed to be primitive. We finally note that the fixed-point-freeness of $\f$ is only used in the proof of that proposition in order to obtain the existence of Hall subgroups of $G$ that are invariant under $\f$. But it is known that, if $\sigma$ is a set of primes and if $\f$ is a coprime automorphism of a finite soluble group $G$, then $\f$ leaves some Hall $\sigma$-subgroup of $G$ invariant. So we need only replace \cite[Lemma 2.2]{khu-moe} with \cite[Remark 2.13]{jab} in the proof of \cite[Proposition 4.1]{khu-moe}.
\end{proof}

\begin{definition} \label{d-b1}
	We define $B_1 : \Z_{\geq 0} \times \Z_{\geq 0} \longrightarrow \Z_{\geq 1} : (d,m) \mapsto 8 d + m + 2$.
\end{definition}

We are now in a position to prove the first part of the main theorem.

\begin{proposition} \label{p-soluble}
	Let $G$ be a finite soluble group with an automorphism $\f$ of coprime order that satisfies a primitive identity $f(x)$ of degree at most $d$. Then $h(G) \leq B_1(d,h(C_G(\f)))$.
\end{proposition}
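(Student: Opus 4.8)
The plan is to combine the two auxiliary results, Theorem \ref{t-tho-kur-tur} and Proposition \ref{p-khu-moe}, after first reducing to a favourable prime. The target $8d + h(C_G(\f)) + 2$ decomposes suggestively: the summand $8d = 2 \cdot (4d)$ is exactly $2 k(A)$ for an operator group $A$ of composition length at most $4d$, as furnished by Proposition \ref{p-khu-moe}; the summand $h(C_G(\f))$ is the Turull term from Theorem \ref{t-tho-kur-tur}; and the final $+2$ should account for the two nilpotent layers that are stripped off before Proposition \ref{p-khu-moe} becomes applicable. Concretely, for a prime $q$ dividing $|G|$ I would set $\bar{G} := G/O_{q',q}(G)$ and $H := \bar{G}/F(\bar{G})$, so that $H$ is the quotient on which Proposition \ref{p-khu-moe} controls the induced action of $\f$.

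First I would establish the combinatorial input that makes the bookkeeping work: there exists a prime $q$ dividing $|G|$ with $h(G/O_{q'}(G)) = h(G)$. Indeed, a nontrivial normal subgroup contained in every $O_{q'}(G)$ would be a $q'$-group for all primes $q$ and hence trivial, so $\bigcap_{q} O_{q'}(G) = 1$ and $G$ embeds as a subdirect product into $\prod_q G/O_{q'}(G)$. Since the Fitting height is monotone under passage to subgroups and equals the maximum of the heights of the factors of a direct product, one gets $h(G) \leq \max_q h(G/O_{q'}(G))$; the reverse inequality is trivial, and the maximum is attained at some prime $q$, which I fix from now on.

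With this $q$ fixed, write $\tilde{G} := G/O_{q'}(G)$, so that $O_{q'}(\tilde{G}) = 1$ and therefore $F(\tilde{G}) = O_q(\tilde{G}) = O_{q',q}(G)/O_{q'}(G)$. Consequently $\bar{G} = G/O_{q',q}(G) \cong \tilde{G}/F(\tilde{G})$ has Fitting height $h(\tilde{G}) - 1$, and $H = \bar{G}/F(\bar{G})$ has Fitting height $h(\tilde{G}) - 2 = h(G) - 2$, the cases $h(G) \leq 2$ being immediate from $B_1(d,h(C_G(\f))) \geq 2$. Now $H$ is a quotient of $G$ by a characteristic, hence $\f$-invariant, subgroup, so $\f$ induces on $H$ a coprime automorphism satisfying the same primitive identity of degree at most $d$; Proposition \ref{p-khu-moe} then yields a cyclic (hence soluble) operator group $A := \langle \f_{|_H} \rangle$ of coprime order with $k(A) \leq 4d$. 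Applying Theorem \ref{t-tho-kur-tur} to $H$ with this $A$, and using the standard coprime-action identity $C_{G/N}(\f) = C_G(\f) N / N$ for $\f$-invariant $N$ (so that $C_H(\f)$ is a quotient of $C_G(\f)$ and $h(C_H(\f)) \leq h(C_G(\f))$), gives $h(H) \leq 2 k(A) + h(C_H(\f)) \leq 8d + h(C_G(\f))$. Combining with $h(G) = h(H) + 2$ yields the claim.

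The step I expect to require the most care, rather than a routine invocation, is the choice of the prime $q$ together with the identity $h(H) = h(G) - 2$: one must be certain that quotienting by $O_{q'}(G)$ loses no Fitting height — which is precisely what the subdirect-product inequality guarantees — and that the two successive Fitting quotients drop the height by exactly two and not merely by at most two. Everything else is a formal combination of the two quoted results with standard facts about coprime action and the Fitting height of subgroups and quotients.
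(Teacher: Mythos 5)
Your proposal is correct and follows essentially the same route as the paper: both pass to the section $H=(G/O_{q',q}(G))/F(G/O_{q',q}(G))$, apply Proposition \ref{p-khu-moe} to get $k(\langle\f_{|_H}\rangle)\le 4d$, apply Theorem \ref{t-tho-kur-tur} together with the coprime-action fact that $C_H(\f)$ is a quotient of $C_G(\f)$, and then recover the final $+2$. The only (harmless) difference is in the bookkeeping of that $+2$: you pre-select a prime $q$ with $h(G/O_{q'}(G))=h(G)$ via $\bigcap_q O_{q'}(G)=1$ and track exact equalities, whereas the paper takes $q$ arbitrary and instead uses $F(G)=\bigcap_q O_{q',q}(G)$ at the end.
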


\begin{proof}
	We may assume that $G$ is non-trivial, since otherwise $h(G) = 0 \leq B_1(d,m)$. So we may select an arbitrary prime divisor $q$ of $|G|$ and consider the automorphism group induced by $\langle \f \rangle$ on the characteristic section $H := G/O_{q',q}(G) / F(G/O_{q',q}(G))$ of $G$. According to Theorem \ref{t-tho-kur-tur}, we have the bound $h(H) \leq 2k(\langle \f_{|_H} \rangle) +  h(C_H(\langle \f_{|_H} \rangle))$. Since $\gcd(|G|,|\f|) = 1$, the group $C_H(\langle \f_{|_H} \rangle)$ is a quotient of $C_G(\langle \f \rangle)$, so that $h(\langle C_H(\f_{|H}) \rangle) \leq h( C_G(\langle \f \rangle) )$. And, according to Proposition \ref{p-khu-moe}, we also have the bound $k(\langle \f_{|_H} \rangle) \leq 4d$. Altogether, we obtain $h(H) \leq 2(4d) + h(C_G(\f))$, and therefore $h(G/O_{q',q}(G)) \leq 8d + h(C_G(\f)) + 1$. Since $F(G) = \bigcap_{q} O_{q',q}(G)$, where $q$ runs over the prime divisors of $|G|$, we conclude that $h(G) \leq 8d + h(C_G(\f)) + 2$. 
\end{proof}

\begin{remark}
	Examples of Thompson \cite{tho64} show that the bound $h(G) \leq B_1(d,h(C_G(\f)))$ is no longer true without the coprimeness condition $\gcd(|G|,|\f|) = 1$.
\end{remark}

If $f(1) \neq 0$, then we can even obtain a bound that depends \emph{only} on $f(x)$.

\begin{corollary}
	Let $G$ be a finite soluble group with an automorphism $\f$ of coprime order that satisfies a primitive identity $f(x)$. If $f(1) \neq 0$, then $h(G) \leq 8 \deg(f(x)) + 2 {|f(1)|} + 2$.
\end{corollary}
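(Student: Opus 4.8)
The plan is to combine Proposition \ref{p-soluble} with a bound on the Fitting height of the fixed-point subgroup $C_G(\f)$. Since Proposition \ref{p-soluble} already gives $h(G) \leq 8 \deg(f(x)) + h(C_G(\f)) + 2$, it suffices to establish that $h(C_G(\f)) \leq 2|f(1)|$, and the whole corollary then follows by substitution.

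The first and cleanest step is to observe that the fixed points have bounded exponent. I would write the identity as $f(x) = b_0 \cdot x^{m_0} + b_1 \cdot x^{m_1} + \cdots + b_k \cdot x^{m_k}$ in the sense of Definition \ref{d-ident}, so that $\f^{m_0}(g)^{b_0} \cdot \f^{m_1}(g)^{b_1} \cdots \f^{m_k}(g)^{b_k} = 1$ for every $g \in G$. If $g$ lies in $C_G(\f)$, then $\f^{m_j}(g) = g$ for each $j$, and the identity collapses to $g^{b_0 + b_1 + \cdots + b_k} = g^{f(1)} = 1$. Since $f(1) \neq 0$, this shows that $C_G(\f)$ is a finite soluble group whose exponent divides $|f(1)|$.

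It then remains to bound the Fitting height of a finite soluble group $C$ whose exponent divides $e := |f(1)|$ \emph{purely} in terms of $e$. This is the heart of the matter and the step I expect to be the main obstacle: the Fitting height of a soluble $\{p,q\}$-group is unbounded in general, so the control must come entirely through the exponent. Here I would invoke the Hall--Higman theory of $p$-length. Each Fitting factor of $C$ is nilpotent of exponent dividing $e$, and the Fitting height of $C$ is controlled by the $p$-lengths $l_p(C)$; for each prime $p$ dividing $e$ the quantity $l_p(C)$ is in turn bounded in terms of the exponent $p^{a_p}$ of a Sylow $p$-subgroup of $C$, and hence in terms of $a_p$. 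Summing these local contributions over the primes dividing $e$ and using the crude estimates $\sum_{p} a_p = k(e) \leq \log_2(e) \leq e$, with the spare factor of two comfortably absorbing the well-known exceptional behaviour at the prime $p = 2$, yields $h(C) \leq 2 e = 2|f(1)|$.

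Finally I would feed this estimate back into the inequality supplied by Proposition \ref{p-soluble}, obtaining $h(G) \leq 8 \deg(f(x)) + 2|f(1)| + 2$, as claimed. The only delicate point is the passage from exponent to Fitting height; everything else is a direct substitution, and the generous target $2|f(1)|$ leaves ample slack so that the precise constants appearing in the Hall--Higman bounds are immaterial.
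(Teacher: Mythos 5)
Your overall architecture is exactly the paper's: first show that every $g \in C_G(\f)$ satisfies $g^{f(1)}=1$, so that the exponent of $C_G(\f)$ divides $|f(1)|$; then bound $h(C_G(\f))$ by $2|f(1)|$; then substitute into Proposition \ref{p-soluble}. Your first and third steps are correct and match the paper verbatim. The difference is entirely in the middle step, which the paper handles by citing Shalev \cite[Lemma 2.5]{sha}: if the exponent is $p_1^{e_1}\cdots p_l^{e_l}$, then $h(C_G(\f)) \leq (2e_1+1)\cdots(2e_l+1)$, and this product is at most $2|f(1)|$.

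That middle step is where your proposal has a genuine gap. You correctly identify it as ``the heart of the matter,'' but you then dispose of it with the phrase ``summing these local contributions over the primes dividing $e$.'' This presupposes a combination principle of the form $h(C) \leq \sum_p l_p(C)$ (or something equivalent), and no such principle is stated, cited, or proved. It is not a formality: the Fitting height of a soluble group is \emph{not} controlled by the individual $p$-lengths in any naive way --- for instance the multiplicative guess $h(G) \leq \prod_p l_p(G)$ already fails for $S_4$, where $l_2=2$, $l_3=1$, but $h=3$. Passing from ``each $l_p(C)$ is bounded via Hall--Higman in terms of the exponent of a Sylow $p$-subgroup'' to ``$h(C)$ is bounded'' is precisely the nontrivial content that the paper outsources to Shalev's lemma. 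Your final numerology ($\sum_p a_p = k(e) \leq \log_2 e \leq e$, factor of $2$ absorbing the prime $2$) would indeed deliver $h(C) \leq 2e$ \emph{if} an additive combination lemma were available, but as written the proof is incomplete at exactly the point where a citation or an actual argument is required. Supplying that lemma (or simply invoking Shalev's Lemma 2.5, which packages the Hall--Higman input together with the combination step) closes the gap and recovers the paper's proof.
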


\begin{proof}
	For every $x \in C_G(\f)$, we clearly have $1 = x^{f(1)}$, so that the exponent of the soluble group $C_G(\f)$ divides the integer $|f(1)|$. Let $|f(1)| = p_1^{e_1} \cdots p_l^{e_l}$ be the factorisation of $|f(1)|$ into distinct prime divisors $p_1,\ldots, p_l$. A result of Shalev \cite[Lemma 2.5]{sha}, based on the fundamental work of Hall---Higman \cite{ha-hi}, gives the bound $h(C_G(\f)) \leq (2e_1 +1) \cdots (2e_l +1)$ and therefore the bound $h(C_G(\f)) \leq 2 |f(1)|$. Proposition \ref{p-soluble} finishes the proof.
\end{proof}

\section{The simple case} \label{Section-simple} \label{Section-simple-strategy} \label{s-simple}

In this section, we prove the main theorem for the special case of simple groups $H$. 

\begin{lemma} \label{l-coprime}
	Let $H$ be a finite simple non-abelian group with a coprime automorphism $\f$ of order $> 1$. Then $H$ is (isomorphic to) the adjoint version of one of the untwisted groups of Lie-type \begin{equation}
			A_n(q),B_n(q),C_n(q),D_n(q),E_6(q),E_7(q),E_8(q),F_4(q),G_2(q), \label{eq-untwist}
		\end{equation} 
	or the adjoint version of one of the twisted groups of Lie-type \begin{eqnarray}
			^2 A_n(q^2),^2D_n(q^2),^2E_6(q^2),^3D_4(q^3),^2B_2(q),^2F_4(q),^2G_2(q), \label{eq-twist}
		\end{eqnarray}
		with the usual limitations on $n$ and $q$. Moreover, the automorphism $\f$ is a conjugate of a pure field automorphism. 
\end{lemma}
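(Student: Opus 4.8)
The plan is to translate the statement into a question about the outer automorphism group of $H$ and then to combine the classification of the finite simple groups with the known structure of $\mathrm{Out}(H)$.

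First I would use coprimeness to pass to $\mathrm{Out}(H)$. Since $\gcd(|\f|,|H|) = 1$ and $\mathrm{Inn}(H) \cong H$ has order $|H|$, the cyclic group $\langle \f \rangle$ meets $\mathrm{Inn}(H)$ trivially and therefore injects into $\mathrm{Out}(H) = \mathrm{Aut}(H)/\mathrm{Inn}(H)$; hence $\mathrm{Out}(H)$ must contain an element of order $|\f| > 1$ coprime to $|H|$. This already eliminates two of the families in the classification: for the alternating groups and for the sporadic groups, $\mathrm{Out}(H)$ is a $2$-group (of order $1$, $2$, or $4$), whereas $|H|$ is even, so $\mathrm{Out}(H)$ has no nontrivial element coprime to $|H|$. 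Thus $H$ is forced to be a simple group of Lie type, and it remains to identify the type and the nature of $\f$.

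For the groups of Lie type I would invoke the standard decomposition $\mathrm{Aut}(H) = N \rtimes (\Phi \times \Gamma)$, where $N := \mathrm{Inndiag}(H)$ is the normal subgroup of inner--diagonal automorphisms (obtained by realising $H$ inside its adjoint group, which is why the adjoint version appears in the statement), $\Phi$ is the cyclic group of field automorphisms, and $\Gamma$ is the graph-automorphism group. The key elementary point is that every prime dividing $|N| = |H|\cdot|\mathrm{Outdiag}(H)|$ already divides $|H|$: running through the families, $|\mathrm{Outdiag}(H)|$ involves only the primes $2$, $3$, and the prime divisors of $q\mp 1$, all of which divide $|H|$, while $|\Gamma| \in \{1,2,3,6\}$ involves only $2$ and $3$. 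Consequently $\gcd(|\f|,|N|) = 1$, and the image $\bar{\f}$ of $\f$ in $\mathrm{Aut}(H)/N \cong \Phi \times \Gamma$ is a nontrivial element of order coprime to $|H|$. Since $\Gamma$ is a $\{2,3\}$-group, $\bar{\f}$ lands in the field factor $\Phi$; in particular the defining field is a proper extension of its prime field and $\f$ induces a nontrivial field automorphism modulo $N$, pinning $H$ down to one of the listed Lie types. To upgrade this to ``$\f$ is conjugate to a pure field automorphism'', I would run a Schur--Zassenhaus argument: choose $\psi \in \Phi$ with the same image as $\f$ in $\mathrm{Aut}(H)/N$ and set $K := N\langle\f\rangle = N\langle\psi\rangle$; then $N \triangleleft K$ with $K/N$ cyclic of order $|\f|$ coprime to $|N|$, and both $\langle\f\rangle$ and $\langle\psi\rangle$ are complements to $N$ in $K$. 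These complements are $N$-conjugate, so some $\nu \in N$ conjugates $\langle\psi\rangle$ onto $\langle\f\rangle$, and matching generators shows $\f$ is $N$-conjugate to a power of $\psi$, which is again a pure field automorphism.

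I expect the main obstacle to be the type-by-type verification underlying the key observation above: establishing, from the explicit structure of $\mathrm{Out}(H)$ supplied by the classification, that the diagonal and graph parts have orders involving no primes outside those dividing $|H|$. Once that finite bookkeeping is in place, both the elimination of the non-Lie families and the coprime conjugacy argument are routine.
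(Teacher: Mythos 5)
Your proposal is correct and follows essentially the same route as the paper: eliminate alternating and sporadic groups via the evenness of $|H|$ and the $2$-group structure of $\mathrm{Out}(H)$, then use the decomposition $\mathrm{Aut}(H) = \mathrm{Inndiag}(H) \rtimes (\Phi \times \Gamma)$, the case-by-case check that all primes dividing the inner-diagonal and graph parts already divide $|H|$, and Schur--Zassenhaus conjugacy to move $\f$ into the pure field automorphisms. Your version merely spells out the complement-conjugacy step in slightly more detail than the paper does.
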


Here, an automorphism $\f : H  \longrightarrow H$ of $H$ is said to be a \emph{pure field automorphism} if it can be obtained by extending an automorphism $K \longrightarrow K$ of the defining field $K$ to $H$ via the root subgroups of $H$ in the usual way: cf. Definition 2.5.1 in \cite{gls} or Chapter 12 in \cite{cart} or Chapter 10 in \cite{stein}. This automorphism of $K$ is then denoted by $\f_K$.

\begin{proof}
	According to the classification, the simple group $H$ is an alternating group on at least $5$ letters, a finite simple group of adjoint Lie-type, or a sporadic group. The alternating groups do not appear in the claim since they have even order and since their outer automorphism group has exponent dividing $2$. We can similarly exclude the sporadic groups and the Tits group $^2 F_4(2)'$. This leaves only the finite simple groups of Lie-type that are listed in \eqref{eq-untwist} and \eqref{eq-twist}. Then $\operatorname{Aut}(H) $ is known to be of the form $ A \rtimes (B \times C)$, where $A$ consists of the innerdiagonal automorphisms, where $B$ consists of the graph automorphisms, and where $C$ consists of the pure field automorphisms. A case-by-case analysis shows that each non-trivial prime divisor of $|A| \cdot |B|$ also divides $|H|$. So $|\f|$ divides $|C|$, and the theorem of Schur---Zassenhaus \cite[4.7.25]{zas} conjugates $\f$ into the group $C$ of pure field automorphisms.
\end{proof}

The next lemma shows that we may assume that the automorphism $\f$ in Theorem \ref{t1} is in fact a pure field automorphism.

\begin{lemma} \label{l-conjugate}
	Let $H$ be a group with automorphism $\varphi$ of order $e$ fixing at most $m$ points and satisfying the identity $f(x)$. Let $\gamma \in \operatorname{Aut}(H)$. Then also the conjugate automorphism $\beta := \gamma \circ \varphi \circ \gamma^{-1}$ of $H$ has order $e$, fixes at most $m$ points, and satisfies the identity $f(x)$.
\end{lemma}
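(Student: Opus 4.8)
The plan is to verify the three assertions—that $\beta$ has order $e$, fixes at most $m$ points, and satisfies $f(x)$—separately, all from the single structural observation that conjugation by $\gamma$ is a group automorphism of $\operatorname{Aut}(H)$ intertwining the powers of $\varphi$ with the powers of $\beta$. Concretely, the first thing I would record is the identity $\beta^j = \gamma \circ \varphi^j \circ \gamma^{-1}$ for every $j \geq 0$, which follows by an immediate induction on $j$ (or simply from the fact that $\sigma \mapsto \gamma \circ \sigma \circ \gamma^{-1}$ is a homomorphism on $\operatorname{Aut}(H)$, so it sends the $j$-th power of $\varphi$ to the $j$-th power of $\beta$). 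Each of the three claims will then be a short consequence of this.

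For the order, since $\beta$ is the image of $\varphi$ under the inner automorphism $\sigma \mapsto \gamma \circ \sigma \circ \gamma^{-1}$ of $\operatorname{Aut}(H)$, and group automorphisms preserve the orders of elements, $\beta$ has the same order $e$ as $\varphi$. For the fixed points, I would show that $C_H(\beta) = \gamma(C_H(\varphi))$: the equation $\beta(h) = h$ is equivalent to $\varphi(\gamma^{-1}(h)) = \gamma^{-1}(h)$, that is, to $\gamma^{-1}(h) \in C_H(\varphi)$; since $\gamma$ is a bijection this yields $|C_H(\beta)| = |C_H(\varphi)| \leq m$, as required.

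For the identity, I would write $f(x) = \sum_{i=0}^{k} b_i \cdot x^{m_i}$ with $\prod_{i=0}^{k} \varphi^{m_i}(h)^{b_i} = 1$ for all $h \in H$, as furnished by the hypothesis that $f(x)$ is an identity of $\varphi$. Using $\beta^{m_i} = \gamma \circ \varphi^{m_i} \circ \gamma^{-1}$ together with the fact that the automorphism $\gamma$ commutes with taking $b_i$-th powers and with forming products, I would compute, for an arbitrary $g \in H$ and with $h := \gamma^{-1}(g)$,
$$\prod_{i=0}^{k} \beta^{m_i}(g)^{b_i} = \gamma\!\left( \prod_{i=0}^{k} \varphi^{m_i}(h)^{b_i} \right) = \gamma(1) = 1.$$
Since $g$ was arbitrary, this shows that $f(x)$ is an identity of $\beta$.

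I expect no serious obstacle here; the lemma is structural rather than computational. The only point that warrants a little care is that $f(x)$ is a general identity in the sense of Definition \ref{d-ident} (the exponents $m_i$ are arbitrary and need not be the consecutive degrees $0,1,\ldots,d$ of an \emph{ordered} identity), but because the argument proceeds term by term and relies only on the homomorphism property of $\gamma$, this generality causes no difficulty.
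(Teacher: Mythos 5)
Your proposal is correct and follows essentially the same route as the paper's own proof: establish $\beta^{j} = \gamma \circ \varphi^{j} \circ \gamma^{-1}$, deduce the order and the equality $C_H(\beta) = \gamma(C_H(\varphi))$, and then push the product defining the identity through $\gamma$ via the substitution $h := \gamma^{-1}(g)$. Your write-up is in fact slightly cleaner than the paper's (whose displayed computation contains some shifted indices), and your remark about handling general rather than ordered identities matches the generality the paper needs.
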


\begin{proof}
	Since $\beta^n = \gamma \circ \f^n \circ \gamma^{-1}$ for all $n \in \Z$, we conclude that $|\beta| = |\f|$. It is clear that $C_H(\beta) = \gamma(C_H(\f))$, so that $|C_H(\beta)| = |C_H(\f)|$. By definition, there exist $b_0,\ldots,b_k \in \Z$ and $m_0 , \ldots , m_k \in \Z_{\geq 0}$ such that $f(x) = b_0 \cdot x^{m_0} + \cdots + b_k \cdot x^{m_k}$ and such that, for all $h \in G$, we have $ \f^{m_1}(h)^{b_1} \cdot \f^{m_2}(h)^{b_2} \cdots \f^{m_k}(h)^{b_k} = 1$. For every $g \in H$, we set $h := \gamma^{-1}(g)$, and we verify that: $\beta^{m_0}(g)^{b_0} \cdot \beta^{m_1}(g)^{b_1} \cdots \beta^{m_d}(g)^{b_d} = \gamma(\f^{m_1}(h)^{b_0} \cdot \f^{m_2}(h)^{b_1} \cdots \f^{m_k}(h)^{b_k}) = \gamma(1) = 1$. 
\end{proof}

For each of the groups in Lemma \ref{l-coprime}, we now define a subgroup with good properties. 

\begin{lemma} \label{l-one-param}
	Let $H$ be one of the simple groups in \eqref{eq-untwist} or \eqref{eq-twist} with defining field $K$, other than $^2A_2(q^2)$, and let $\f$ be a pure field automorphism of $H$. Then there is an injective homomorphism $\x : (K,+) \longrightarrow H : t \mapsto \x(t) $ that satisfies $\f(\x(t)) = \x(\f_K(t))$ for all $t \in K$.
\end{lemma}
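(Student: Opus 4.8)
The plan is to realize $\x$ as the parametrizing map of a single (relative) root subgroup of $H$, on which a pure field automorphism manifestly acts through $\f_K$. Recall that $H$ is generated by its root subgroups $X_r = \{ x_r(t) : t \in K \}$, that each satisfies $x_r(t) \cdot x_r(u) = x_r(t + u)$ with $x_r(t) = 1$ only for $t = 0$, and that, by the definition of a pure field automorphism recalled just after Lemma \ref{l-coprime}, the automorphism $\f$ acts on these generators by $\f(x_r(t)) = x_r(\f_K(t))$. Hence it suffices to produce, for each group in \eqref{eq-untwist} and \eqref{eq-twist} other than $^2A_2(q^2)$, one root subgroup that is \emph{abelian} and parametrized by the \emph{full} defining field $K$: setting $\x(t) := x_r(t)$ then yields an injective homomorphism $(K,+) \longrightarrow H$ with $\f \circ \x = \x \circ \f_K$. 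For the untwisted groups \eqref{eq-untwist} there is nothing to prove, since $K = \F_q$ and any root $r$ of the ambient root system $\Phi$ already provides such an $X_r \cong (K,+)$.

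For the twisted groups \eqref{eq-twist} I would write $H$ as the fixed-point subgroup of a Steinberg endomorphism $\sigma$ obtained by composing a graph symmetry $\rho$ with a Frobenius map, and recall that the root subgroups of $H$ are indexed by the $\rho$-orbits $S$ of roots. When no two distinct roots of an orbit $S$ sum to a root, the associated root subgroup is \emph{abelian} and parametrized by $\F_{q^{|S|}}$; for an orbit $S = \{r, \bar r\}$ of size $2$ it takes the concrete form $\{ x_r(t) \cdot x_{\bar r}(t^q) : t \in \F_{q^2} \}$. Because $\f_K$ is a power of Frobenius, it commutes with $\sigma$, preserves this subgroup, and acts on its parameter $t$ by $\f_K$, so that $\f(x_r(t) x_{\bar r}(t^q)) = x_r(\f_K(t)) x_{\bar r}(\f_K(t)^q)$. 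For $^2A_n(q^2)$ with $n \geq 3$, for $^2D_n(q^2)$, and for $^2E_6(q^2)$ I would select an orbit $\{r, \bar r\}$ of size $2$ with $r + \bar r$ not a root and with $\F_{q^2} = K$ --- for example a pair of non-adjacent simple roots interchanged by $\rho$, such as $\{\alpha_1, \alpha_n\}$ in type $A_n$ ($n \geq 3$) or the two fork nodes in type $D_n$ --- and then put $\x(t) := x_r(t) \cdot x_{\bar r}(t^q)$; for $^3D_4(q^3)$ the size-$3$ triality orbit of the three outer nodes plays the same role with $K = \F_{q^3}$. For the Suzuki--Ree families $^2B_2(q)$, $^2F_4(q)$, and $^2G_2(q)$, where $K = \F_q$ and $\sigma$ arises from a special isogeny, I would instead take the \emph{long} root subgroup, i.e.\ the centre of a Sylow $p$-subgroup in the defining characteristic: it is abelian, it is parametrized cleanly by $\F_q$ (unlike the short root subgroups, whose parametrization is distorted by the Tits endomorphism), and $\f$ again acts on it by $\f_K$.

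The main obstacle is precisely this twisted bookkeeping: for each twisted family one must check that some root subgroup is at once abelian and parametrized by the whole field $K$ rather than by a proper subfield, and that the chosen pure field automorphism restricts to $\f_K$ on it. The single genuine obstruction is $^2A_2(q^2)$, which is why it is excluded from the statement: in type $A_2$ the only $\rho$-orbit of size $2$ is $\{\alpha_1, \alpha_2\}$ with $\alpha_1 + \alpha_2 \in \Phi$, so its root subgroup is the non-abelian group of order $q^3$, while the unique $\rho$-fixed root $\alpha_1 + \alpha_2$ yields only an $\F_q$-parametrized subgroup; thus no abelian root subgroup parametrized by the full field $\F_{q^2}$ is available.
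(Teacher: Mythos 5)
Your proposal is correct and follows essentially the same route as the paper: for the untwisted groups a single root subgroup, and for the twisted groups a root subgroup of $H$ attached to an orbit of the graph symmetry (an $A_1\times A_1$ or $A_1\times A_1\times A_1$ class for $^2A_n$ ($n\geq 3$), $^2D_n$, $^2E_6$, $^3D_4$, and the $q$-element centre of the relevant non-abelian class subgroup for the Suzuki--Ree families), together with the observation that a power of Frobenius commutes with the twist and so acts on the parameter by $\f_K$; your explanation of why $^2A_2(q^2)$ must be excluded matches the paper's remark that $A_1$-type classes only yield homomorphisms on a proper subfield. The only cosmetic difference is $^2F_4(q)$, which the paper handles via an $A_1\times A_1$ class (with the $t\mapsto\x_r(t^\theta)\x_{\bar r}(t)$ short--long adjustment) rather than the long-root centre, but both choices give an abelian subgroup parametrized by all of $K=\F_q$ on which $\f$ acts through $\f_K$.
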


\begin{proof}
	Suppose first that $H$ is one of the untwisted groups of Lie-type \eqref{eq-untwist}. Then $H$ is generated by its root subgroups: $H = \langle \x_\alpha(t) | \alpha \in \Phi , t \in K \rangle$, where $\Phi$ is the defining root system and where $K$ is the defining field. Moreover, any such root subgroup defines an injective homomorphism $\x_\alpha : (K,+) \longrightarrow H : t \mapsto \x_\alpha(t)$ from the additive group $(K,+)$ of the field to $H$ and it satisfies $\f(\x_\alpha(t)) = \x_\alpha(\f_K(t))$, by definition. All of these claims are well-known and can be found in \cite{stein}, \cite{cart}, and \cite{gls}. \\
	
	Suppose next that $H$ is one of the twisted groups of Lie-type \eqref{eq-twist}, realized as a subgroup of an untwisted group $G$ that is fixed element-wise by a distinguished automorphism $\sigma$ (the twist). The map $t \mapsto \x(t)$ cannot be selected from the root subgroups of $G$ in this case, since the image of such a root subgroup need not be contained in $H$. But a suitable product of root subgroups, with an added twist, will naturally give rise to an injective homomorphism into $H$ that is compatible with all pure field automorphisms. \\
	
	We will define the map by using the results, terminology, and notation of Chapter 13 in Carter's book \cite{cart}. (Alternatively, the reader may consult Chapter 11 of Steinberg's lecture notes \cite{stein}, and the exact parametrizations that are given in \cite[Lemma 63]{stein}. A third reference is Chapter 2 in the book of Gorenstein---Lyons---Solomon \cite{gls}, with exact parametrizations in \cite[Table 2.4]{gls}. The reader is advised, however, to take into account subtle differences in notation, especially regarding the field parameters and structure constants.) We recall that Lemma $13.2.1$ of \cite{cart} partitions the root system $\Phi$ of $G$ into equivalence classes. The opening paragraph of \cite[13.5]{cart} lists, for each such $\Phi$, exactly which equivalence classes appear. \\
	
	Suppose first that $H$ is of type $^2A_n(q^2)$ (for some $n > 2$), or of type $^2E_6(q^2)$, or of type $^2D_n(q^2)$ (for some $n > 3$), or of type $^2F_4(q)$. Then the root system contains an equivalence class $\{r,\overline{r}\}$ of positive roots of type $A_1 \times A_1$. If the roots have the same length, then the map $\x : (K,+) \longrightarrow H : t \mapsto \x_r(t) \cdot \x_{\bar{r}}(\bar{t})$ is a well-defined, injective homomorphism according to \cite[Proposition 13.6.3 (ii)]{cart}, \cite[Proposition 13.6.4 (ii)]{cart}, and \cite[Theorem 5.3.3]{cart}. Suppose next that $r$ is a short root and $\bar{r}$ is a long root. Then the map $\x : (K,+) \longrightarrow H : t \mapsto \x_r(t^\theta) \cdot \x_{\bar{r}}({t})$ is a well-defined, injective homomorphism according to \cite[Proposition 13.6.3 (v)]{cart}, \cite[Proposition 13.6.4 (v)]{cart}, and \cite[Theorem 5.3.3]{cart}. \\ 
	
	Suppose next that $H$ is of type $^3D_4(q^3)$. In this case, the root system contains an equivalence class $\{r,\bar{r},\bar{\bar{r}}\}$ of positive roots of type $A_1 \times A_1 \times A_1$. Then the map $\x : (K,+) \longrightarrow H : t \mapsto \x_r(t) \cdot \x_{\bar{r}}(\bar{t}) \cdot \x_{\bar{\bar{r}}}(\bar{\bar{t}})$ is a well-defined, injective homomorphism, according to \cite[Proposition 13.6.3 (iii)]{cart}, \cite[Proposition 13.6.4 (iii)]{cart} and \cite[Theorem 5.3.3]{cart}. \\
	
	Suppose next that $H$ is of type $^2B_2(q)$. In this case, the root system contains an equivalence class  $\{a,b,a+b,2a+b\}$ of positive roots of type $B_2$. Then the map $\x : (K,+) \longrightarrow H : u \mapsto \x_{a+b}(u) \cdot \x_{2a+b}(u^{2 \theta})$ is a well-defined, injective homomorphism, according to \cite[Proposition 13.6.3 (vi)]{cart}, \cite[Proposition 13.6.4 (vi)]{cart}, and \cite[Theorem 5.3.3]{cart}. \\
	
	Suppose finally that $H$ is of type $^2G_2(q)$. In this case, the root system contains an equivalence class  $\{a,b,a+b,2a+b,3a+b,3a+2b\}$ of positive roots of type $G_2$. Then the map $\x : (K,+) \longrightarrow H : v \mapsto \x_{2a+b}(v^\theta) \cdot \x_{3a+2b}(v)$ is a well-defined, injective homomorphism, according to \cite[Proposition 13.6.3 (vii)]{cart},  \cite[Proposition 13.6.4 (vii)]{cart}, and \cite[Theorem 5.3.3]{cart}. \\
	
	By definition, the maps $\x_a , \x_{a+b} , \ldots $ all commute with $\f$. Moreover, all automorphisms $t \mapsto \bar{t}, v \mapsto v^\theta, \ldots$ of the defining field commute with $\f_K$. So each of the above maps $\x$ commutes with $\f$.
\end{proof}

\begin{remark}
	We have avoided using the equivalence classes of positive roots of type $A_1$ because the corresponding (well-defined, injective) homomorphism is defined only on a \emph{proper subfield} of the defining field. This will not be enough to prove Proposition \ref{p-order}. 
\end{remark}

For technical reasons, we treat the projective special unitary groups $^2A_2(q^2)$ separately. 

\begin{lemma} \label{l-def-domain}
	Let $K$ be a field of $q^2$ elements and let $N$  be an integer. Then the subset $$D_{N,K} := \{ (s,u) \in K \times K |u + {u}^q = - N \cdot s \cdot {s}^q \}$$ of $K \times K$ is a group with respect to the operation $\ast : (K \times K) \times (K \times K) \longrightarrow K \times K : ((s,u),(t,v)) \mapsto (s+t,u+v - N \cdot s^q \cdot t)$, the inversion $\iota: K \longrightarrow K : (s,u) \mapsto (-s,u^q)$, and the neutral element $(0,0)$. The projection of $D_{N,K}$ onto its first coordinate is all of $K$. If $\f_K$ is an automorphism of $K$ and if $(s,u) \in D_{N,K}$, then also $(\f_K(s),\f_K(u)) \in D_{N,K}$.
\end{lemma}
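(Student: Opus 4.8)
The plan is to recognize $D_{N,K}$ as a Heisenberg-type group and then verify the group axioms by direct computation. Since $|K| = q^2$, the Frobenius map $\operatorname{Fr} : K \to K : x \mapsto x^q$ is an automorphism of order dividing $2$ whose fixed field is the subfield $\F_q$ of $q$ elements; in particular $x^{q^2} = x$ for all $x \in K$. For any $x \in K$, the trace $x + x^q$ and the norm $x \cdot x^q$ are fixed by $\operatorname{Fr}$ and therefore lie in $\F_q$. The defining condition $u + u^q = -N \cdot s \cdot s^q$ thus asserts that the trace of $u$ equals $-N$ times the norm of $s$, and both sides genuinely lie in $\F_q$ (using that $N \cdot 1 \in \F_q$). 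I will use the identity $x^{q^2} = x$ throughout without further comment.

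First I would check that $\ast$ maps $D_{N,K} \times D_{N,K}$ into $D_{N,K}$. Given $(s,u),(t,v) \in D_{N,K}$, one computes the trace of the second coordinate $u + v - N s^q t$ of the product and, using $s^{q^2} = s$, finds it equals $(u + u^q) + (v + v^q) - N(s^q t + s t^q)$; substituting the two defining relations turns this into $-N(s s^q + t t^q + s^q t + s t^q) = -N(s+t)(s+t)^q$, which is exactly the relation required of $(s+t, u+v-Ns^qt)$. That $(0,0) \in D_{N,K}$ is immediate, and the two one-sided identity laws are trivial. For the inverse, I would first verify $(-s,u^q) \in D_{N,K}$ (again using $u^{q^2}=u$ and $(-s)(-s)^q = s s^q$) and then check $(s,u) \ast (-s,u^q) = (0, u + u^q + N s^q s) = (0,0)$ by the defining relation, together with the symmetric computation on the other side. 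Associativity is the one step worth spelling out: expanding $((s,u)\ast(t,v))\ast(w,z)$ and $(s,u)\ast((t,v)\ast(w,z))$, the first coordinates obviously agree, and both second coordinates reduce to $u+v+z - N(s^q t + s^q w + t^q w)$, so the two bracketings coincide.

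It remains to prove the two supplementary claims. For surjectivity of the first projection, note that for any $s \in K$ the target value $-N s s^q$ lies in $\F_q$; since the trace map $K \to \F_q : u \mapsto u + u^q$ of the separable extension $K/\F_q$ is surjective, there is some $u \in K$ with $u + u^q = -N s s^q$, giving $(s,u) \in D_{N,K}$. For compatibility with a field automorphism $\f_K$, I would use that the automorphism group of $K = \F_{q^2}$ is cyclic, hence abelian, so $\f_K$ commutes with $\operatorname{Fr}$ and fixes the prime field (hence $N \cdot 1$); applying $\f_K$ to the relation $u + u^q = -N s s^q$ then yields $\f_K(u) + \f_K(u)^q = -N \f_K(s) \f_K(s)^q$, as needed. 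There is no genuine obstacle here: the only thing to watch is the bookkeeping of Frobenius twists in the closure and associativity computations, together with the repeated but essential use of $x^{q^2}=x$ and the fact that the integer $N$ and the norm $s s^q$ are central in the sense of lying in the fixed field $\F_q$.
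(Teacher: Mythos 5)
Your proof is correct and takes essentially the same route as the paper: the paper likewise treats the group axioms and the $\f_K$-compatibility as routine computations (which you carry out in full), and establishes surjectivity of the first projection via the trace map $K \to \F_q$ applied to the target value $-N s s^q \in \F_q$. The only cosmetic difference is that the paper constructs an explicit preimage $u := -N s s^q \beta/\alpha$ from a fixed element $\beta$ of nonzero trace $\alpha$, whereas you invoke surjectivity of the trace of the separable extension $K/\F_q$ directly.
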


\begin{proof}
	The first and third claim can be verified by a routine computation. For the second claim, we consider the unique subfield $L$ of $K$ that consists of exactly $q$ elements and we consider the map $K \longrightarrow K : \beta \mapsto \beta + \beta^q$. Then the kernel of this map is $L$ and the image of the map is contained in $L$. So there exist $\alpha \in L \setminus \{0\}$ and $\beta \in K \setminus L$ such that $(\beta + \beta^q)/\alpha = 1$. Now let $s \in K$ be arbitrary. Then clearly $s \cdot s^q \in L$, and we define $u := - N \cdot s \cdot s^q \cdot \beta / \alpha$. Since $u + {u}^q = - N \cdot s \cdot s^q \cdot \beta / \alpha - N \cdot s^q \cdot s \cdot \beta^q / \alpha = - N \cdot s \cdot {s}^q$, we have $(s,u) \in D_{N,K}$.
\end{proof}

One can verify that $(D_{N,K},\ast)$ is a special group of order $q^3$. 

\begin{lemma} \label{l-hom-psu22}
	Let $H$ be a finite simple adjoint group of type $^2A_2(q^2)$ with defining field $K$ and let $\f$ be a pure field automorphism of $H$. Then there is an integer $N$ and an injective homomorphism $\x : (D_{N,K},\ast) \longrightarrow H$ such that $\f(\x(s,u)) = \x(\f_K(s),\f_K(u))$ for all $(s,u) \in D_{N,K}$.
\end{lemma}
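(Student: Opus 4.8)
The plan is to construct the embedding $\x$ explicitly from the root subgroups of the ambient untwisted group $G = A_2(q^2)$, mirroring the construction in Lemma \ref{l-one-param} but now using an equivalence class of type $A_2$ rather than of type $A_1 \times A_1$. The group $H = {}^2A_2(q^2)$ sits inside $G$ as the fixed points of the twist $\sigma$, and its defining field is $K$ with $q^2$ elements. The reason this case was deferred is that the relevant equivalence class of positive roots here is a single $\sigma$-orbit $\{r, \bar r\}$ together with the sum $r + \bar r$ (a $B_2$- or $A_2$-type configuration in the terminology of \cite[13.5]{cart}), and the associated one-parameter subgroup is genuinely $2$-dimensional: the product $\x_r(s) \cdot \x_{\bar r}(\bar s) \cdot \x_{r+\bar r}(u)$ is $\sigma$-stable not for arbitrary $(s,u)$ but precisely when $(s,u)$ lies in a Heisenberg-type constraint, which is exactly the defining relation $u + u^q = -N \cdot s \cdot s^q$ of $D_{N,K}$ from Lemma \ref{l-def-domain}.

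First I would fix notation from Chapter 13 of \cite{cart}: identify the equivalence class of positive roots governing the long root subgroup of ${}^2A_2(q^2)$, and write down the parametrization of the corresponding twisted root elements as given in \cite[Proposition 13.6.3]{cart} and \cite[Proposition 13.6.4]{cart}. This yields a map $\x : K \times K \longrightarrow G$ of the shape $t = (s,u) \mapsto \x_r(s)\,\x_{\bar r}(\bar s)\,\x_{r+\bar r}(u)$ (up to the exact field-power normalizations and a structure constant $N$ read off from the Chevalley commutator relations). Next I would compute, using the Chevalley commutator formula \cite[Theorem 5.3.3]{cart}, the product $\x(s,u) \cdot \x(t,v)$ and verify that it equals $\x$ evaluated at $(s+t,\,u+v-N\cdot s^q\cdot t)$; this is precisely the operation $\ast$ of Lemma \ref{l-def-domain}, so $\x$ restricts to a homomorphism on $(D_{N,K},\ast)$. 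Injectivity on $D_{N,K}$ follows because distinct parameters land in distinct double cosets of the root subgroups, exactly as in the untwisted case via \cite[Theorem 5.3.3]{cart}.

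The $\f$-equivariance is then the formal part: since each $\x_\alpha$ commutes with $\f$ by definition of a pure field automorphism, and since the field maps $t \mapsto \bar t = t^q$ used in the parametrization commute with $\f_K$ (Frobenius powers commute), the composite $\x$ satisfies $\f(\x(s,u)) = \x(\f_K(s),\f_K(u))$; this is the same argument as the final paragraph of the proof of Lemma \ref{l-one-param}. The fact that $(\f_K(s),\f_K(u)) \in D_{N,K}$ whenever $(s,u) \in D_{N,K}$ is already supplied by the last sentence of Lemma \ref{l-def-domain}, so the target is well-defined.

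I expect the main obstacle to be pinning down the exact integer $N$ and the precise field-power exponents ($s^q$ versus $s$, and whether $u$ or $u^\theta$ appears) so that the computed group law genuinely matches $\ast$ rather than merely resembling it. The Chevalley structure constants and the normalization conventions differ subtly between \cite{cart}, \cite{stein}, and \cite{gls} — a point the authors themselves flagged in Lemma \ref{l-one-param} — so the delicate step is a careful bookkeeping of these constants to confirm that the twist condition coming from $\sigma$-invariance in $G$ is \emph{identical} to the relation $u + u^q = -N\cdot s\cdot s^q$ defining $D_{N,K}$, with no hidden sign or conjugation discrepancy.
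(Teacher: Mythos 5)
Your proposal is correct and follows essentially the same route as the paper: the paper also takes the equivalence class $\{r,\bar r\}$ of type $A_2$, sets $N := N_{r,\bar r}$, defines $\x(s,u) = \x_r(s)\,\x_{\bar r}(\bar s)\,\x_{r+\bar r}(u)$, and cites \cite[Propositions 13.6.3 (iv), 13.6.4 (iv)]{cart} and \cite[Theorem 5.3.3]{cart} for well-definedness and injectivity, with the identical equivariance argument at the end. The structure-constant bookkeeping you flag as the delicate step is absorbed in the paper by letting $N$ be whatever integer Carter's parametrization produces, which is all the lemma's existential statement requires.
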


\begin{proof}
	As in the proof of Lemma \ref{l-one-param}, we obtain an equivalence class $\{r,\bar{r}\}$ of positive roots of type $A_2$ with corresponding structure constant $N_{r,\bar{r}}$ (still in the notation of \cite{cart}). Set $N := N_{r,\bar{r}}$ and consider the group $(D_{N,K},\ast)$ that was defined in Lemma \ref{l-def-domain}. Then the map $\x : (D_{N,K},\ast) \longrightarrow {^2A_2}(q^2) : (s,u) \mapsto \x_r(s) \cdot \x_{\bar{r}}(\bar{s}) \cdot \x_{r + \bar{r}}(u)$ is a well-defined, injective homomorphism, according to \cite[Proposition 13.6.3 (iv)]{cart}, \cite[Proposition 13.6.4 (iv)]{cart}, and \cite[Theorem 5.3.3]{cart}. Since $\f$ commutes with the $\x_r, \x_{\bar{r}}, \x_{r + \bar{r}}$, and since $\f_K$ commutes with the automorphism $s \mapsto \bar s$ of the field, we once more obtain $\f(\x(s,u)) = \x(\f_K(s),\f_K(u))$ for all $(s,u) \in D_{N,K}$.
\end{proof}

We now use these subgroups to pull the identity $f(x)$ of $\f$ down to the level of the defining field. This allows us to bound the order of $\f$.

\begin{proposition} \label{p-order}
	Let $H$ be one of the groups of type \eqref{eq-untwist} or \eqref{eq-twist} and let $\f$ be a pure field automorphism of $H$ satisfying the primitive identity $f(x)$. Then $|\f| \leq \deg(f(x))$.
\end{proposition}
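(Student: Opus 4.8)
The plan is to push the identity $f(x)$ of $\f$ down to the defining field $K$, where it becomes an additive relation that pins down the order of the associated field automorphism $\f_K$. Write $K$ for the defining field, of characteristic $p$, and recall from the previous lemmas that we have an injective homomorphism $\x$ intertwining $\f$ with $\f_K$: for the untwisted and most twisted types this is the map $\x : (K,+) \to H$ of Lemma \ref{l-one-param} with $\f(\x(t)) = \x(\f_K(t))$, while for $^2A_2(q^2)$ it is the map $\x : (D_{N,K},\ast) \to H$ of Lemma \ref{l-hom-psu22} with $\f(\x(s,u)) = \x(\f_K(s),\f_K(u))$. First I would evaluate the (not necessarily ordered) identity on the image of $\x$. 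Writing $f(x) = \sum_i b_i x^{m_i}$ and plugging $g = \x(t)$ into $\prod_i \f^{m_i}(g)^{b_i} = 1$, the intertwining relation gives $\prod_i \x(\f_K^{m_i}(t))^{b_i} = 1$. The crucial point is that $\x(K)$ is abelian, being a homomorphic image of $(K,+)$, so the product collapses and, by injectivity of $\x$, this is equivalent to the additive relation $\sum_i b_i \cdot \f_K^{m_i}(t) = 0$ in $K$, for every $t \in K$. Reducing the integer coefficients modulo $p$, this says precisely that the operator $\bar f(\f_K) := \sum_i \bar b_i \f_K^{m_i}$ vanishes on $K$, where $\bar f := f \bmod p \in \F_p[x]$. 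For $^2A_2(q^2)$ the image of $\x$ is no longer abelian, but composing with the projection $(s,u) \mapsto s$ — a surjective homomorphism $D_{N,K} \to (K,+)$ by Lemma \ref{l-def-domain} — yields the very same relation $\bar f(\f_K) = 0$ on all of $K$.

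It then remains to turn $\bar f(\f_K) = 0$ into the bound $|\f_K| \leq \deg(f)$. This is exactly where primitivity enters: since the content of $f$ is $1$, its reduction $\bar f$ is a non-zero polynomial of degree at most $\deg(f)$. Set $e := |\f_K|$. Reducing the exponents $m_i$ modulo $e$ and grouping terms, one has $\bar f(\f_K) = \sum_{r=0}^{e-1} c_r \f_K^{r}$ for suitable $c_r \in \F_p$. Now $\f_K^0, \f_K^1, \ldots, \f_K^{e-1}$ are pairwise distinct automorphisms of $K$, hence linearly independent over $K$ by Dedekind's lemma on the independence of characters; consequently $c_r = 0$ for every $r$. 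If we had $e > \deg(f)$, then the exponents $m_i \leq \deg(f) < e$ would be pairwise distinct residues modulo $e$, so each $c_r$ would equal a single coefficient $\bar b_i$, forcing $\bar f = 0$ and contradicting primitivity. Therefore $e = |\f_K| \leq \deg(f)$. (Equivalently, by the normal basis theorem $K$ is free of rank one over $F[\langle \f_K \rangle]$, with $F$ the fixed field of $\f_K$, so the minimal polynomial of $\f_K$ over $F$ is $x^e - 1$; as $\bar f \in \F_p[x] \subseteq F[x]$ annihilates $\f_K$, we get $(x^e-1) \mid \bar f$ and again $e \leq \deg \bar f \leq \deg f$.)

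Finally I would convert the bound on $|\f_K|$ into a bound on $|\f|$. The two orders in fact coincide: since $\f$ is a pure field automorphism, it is obtained by letting $\f_K$ act through the root subgroups, and these generate $H$; hence if $\f_K^n = \mathrm{id}$ then $\f^n$ fixes every root subgroup element and so $\f^n = \mathrm{id}$, giving the divisibility $|\f| \mid |\f_K|$ that I need. (The reverse divisibility drops out immediately from the intertwining relation and the injectivity of $\x$.) Thus $|\f| = |\f_K| \leq \deg(f)$, as claimed. The step to watch most carefully is the reduction in the first paragraph: one must be certain that the group-theoretic identity — which need not be ordered and lives in a non-abelian $H$ — really does collapse to the clean additive identity $\bar f(\f_K)=0$. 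This is precisely why Lemmas \ref{l-one-param} and \ref{l-hom-psu22} were arranged to land inside an abelian subgroup (or to project onto one), and why primitivity is indispensable to keep $\bar f \neq 0$. Once that reduction is secured, the order bound follows softly from the linear independence of distinct field automorphisms.
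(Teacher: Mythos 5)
Your proposal is correct and follows essentially the same route as the paper: both pull the identity down to the additive group of the defining field via the intertwining maps of Lemmas \ref{l-one-param} and \ref{l-hom-psu22} (with the first-coordinate projection of Lemma \ref{l-def-domain} handling $^2A_2(q^2)$), reduce the coefficients modulo the characteristic, and use primitivity to force two powers $\f_K^i = \f_K^j$ with $0 \le i < j \le \deg f$ to coincide. The only cosmetic difference is that you conclude via Dedekind's independence of characters (equivalently, the normal basis theorem), whereas the paper evaluates at powers of a generator of $K^\times$ and argues with the resulting Vandermonde determinant --- the same argument in different clothing.
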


\begin{proof}
	As before, we consider $H$ to be a subgroup of an untwisted group $G$ with (possibly trivial) twist $\sigma$. Let the pure field automorphism $\f : H \longrightarrow H$ be induced by the automorphism $\f_K : K \longrightarrow K : t \mapsto t^{q_0}$ of the defining field $K$. Let the identity $f(x)$ be given by $a_0 + a_1 \cdot x + \cdots + a_d \cdot x^d$, with $a_d \neq 0$. By definition, there exist $b_0,\ldots,b_k \in \Z$ and $m_0 , \ldots , m_k \in \Z_{\geq 0}$ such that $f(x) = b_0 \cdot x^{m_0} + \cdots + b_k \cdot x^{m_k}$ and such that, for all $h \in H$, we have \begin{equation}
		\f^{m_0}(h)^{b_0} \cdot \f^{m_1}(h)^{b_1} \cdots \f^{m_k}(h)^{b_k} = 1. \label{eq-ident}
	\end{equation}
	Suppose first that $H$ is of type $^2A_2(q^2)$. Let $\x : (D_{N,K},\ast) \longrightarrow H$ be the injective homomorphism of Lemma \ref{l-hom-psu22} and select an arbitrary $s \in K$. According to Lemma \ref{l-def-domain}, there exists some $u \in K$ such that $(s,u) \in D_{N,K}$. By evaluating \eqref{eq-ident} in $h := \x(s,u)$, we obtain 
	\begin{eqnarray*}
		1 &=& \f^{m_0}(\x(s,u))^{b_0} \cdot \f^{m_1}(\x(s,u))^{b_1} \cdots \f^{m_k}(\x(s,u))^{b_k} \\ \label{eq-comp1}
		&=& \x( (\f_K^{m_0}(s),\f_K^{m_0}(u))^{b_0} \ast (\f_K^{m_1}(s),\f_K^{m_1}(u))^{b_1} \ast \cdots \ast (\f_K^{m_k}(s),\f_K^{m_k}(u))^{b_k}  ) \\
		&=& \x( b_0 \cdot \f_K^{m_0}(s) + b_1 \cdot \f_K^{m_1}(s) + \cdots + b_k \cdot \f_K^{m_k}(s),v) \\
		&=& \x( a_0 \cdot s + a_1 \cdot \f_K(s) + \cdots + a_d \cdot \f_K^d(s),v) , \label{eq-comp4}
	\end{eqnarray*}
	for some $v \in K$ such that $(a_0 \cdot s + a_1 \cdot \f_K(s) + \cdots + a_d \cdot \f_K^d(s),v) \in D_{N,K}$. Since the map $\x$ is injective, we have $0_{K} = a_0 \cdot s + a_1 \cdot \f_K(s) + \cdots + a_d \cdot \f_K^d(s)$, and therefore \begin{equation}
		0_{K} = a_0 \cdot s^{{q_0}^0 - 1} + a_1 \cdot s^{{q_0}^1 -1} + \cdots + a_d \cdot s^{{q_0}^d - 1}, \label{eq-field-ident}
	\end{equation} for all $s \in K^\times$. Let $\omega$ be a generator of the cyclic group $(K^\times,\cdot)$ of the finite field $K$. By evaluating \eqref{eq-field-ident} in the iterated powers $\omega^0 , \omega^1 , \ldots , \omega^d$ of $\omega$, we see that the vector $(a_0,\ldots,a_d)^T \in K^{d+1}$ is a solution of the homogeneous Vandermonde system
	$$
	\left( 
	\begin{array}{ccccc}
		\omega^{0({q_0}^0-1)}	& \omega^{0({q_0}^1-1)} & \omega^{0({q_0}^2-1)} & \cdots & \omega^{0({q_0}^d-1)} \\
		\omega^{1({q_0}^0-1)}	& \omega^{1({q_0}^1-1)} & \omega^{1({q_0}^2-1)} & \cdots & \omega^{1({q_0}^d-1)} \\
		\vdots	& \vdots & \vdots & \ddots & \vdots \\
		\omega^{(d-1)({q_0}^0-1)}	& \omega^{(d-1)({q_0}^1-1)} & s^{(d-1)({q_0}^2-1)} & \cdots & \omega^{(d-1)({q_0}^d-1)} \\
		\omega^{d({q_0}^0-1)}	& \omega^{d({q_0}^1-1)} & s^{d({q_0}^2-1)} & \cdots & \omega^{d({q_0}^d-1)} 
	\end{array}
	\right) \cdot
	\left( 
	\begin{array}{c}
		a_0	\\
		a_1	\\
		\vdots	\\
		a_{d-1}	\\
		a_d	
	\end{array}
	\right) 
	=
	\left( 
	\begin{array}{c}
		0	\\
		0	\\
		\vdots	\\
		0	\\
		0	
	\end{array}
	\right).
	$$
	The determinant $\pm \prod_{0 \leq i < j \leq d} (\omega^{{q_0}^i - 1} - \omega^{{q_0}^j - 1})$ of this matrix vanishes in $K$, since otherwise the coefficients $a_0,\ldots,a_d$ of $f(x)$ all vanish modulo $p$, which would contradict the fact that $f(x)$ is primitive. So there exist $0 \leq i < j \leq d$ such that $\omega^{{q_0}^i - 1} = \omega^{{q_0}^j - 1}$ and therefore $\f_K^{i}(t) = \f_K^{j}(t)$ for all $t \in K$. We see, in particular, that the order of $\f_K$ is at most $d$. So the order of $\f$ on the generating set $\{ \x_\alpha(t) | \alpha \in \Phi , t \in K\}$ of $G$ is at most $d$. The order of $\f$ on the subgroup $H$ of $G$ is therefore also at most $d$. \\
	
	Suppose next that $H$ is of type \eqref{eq-untwist} or \eqref{eq-twist}, but \emph{not} of type $^2A_2(q^2)$. Let $\x : (K,+) \longrightarrow H$ be the injective homomorphism of Lemma \ref{l-one-param}. We then select an arbitrary $s \in K$ and evaluate \eqref{eq-ident} in $h := \x(s)$ in order to obtain
	\begin{eqnarray*}
		1 &=& \f^{m_0}(\x(s))^{b_0} \cdot \f^{m_1}(\x(s))^{b_1} \cdots \f^{m_k}(\x(s))^{b_k} \\
		  &=& \x( b_0 \cdot \f_K^{m_0}(s) + b_1 \cdot \f_K^{m_1}(s) + \cdots + b_k \cdot \f_K^{m_k}(s) ) \\
		  &=& \x( a_0 \cdot s + a_1 \cdot \f_K(s) + \cdots + a_d \cdot \f_K^d(s) ).
	\end{eqnarray*}
	Since the map $\x$ is injective, we have $0_K = a_0 \cdot s + a_1 \cdot \f_K(s) + \cdots + a_d \cdot \f_K^d(s)$, for all $s \in K$. The rest of the proof can now be repeated verbatim.
\end{proof}

\begin{definition} \label{d-b3}
	We define $B_3 : \Z_{\geq 0} \times \Z_{\geq 1} \longrightarrow \Z_{\geq 1} : (d,m) \mapsto m + m^{1000 \cdot d} $.
\end{definition}

The following lemma gives us an explicit upper bound on the order of our simple group $H$ in terms of $|\f|$ and $|C_H(\f)|$. We have made no attempt to make this bound optimal.

\begin{lemma} \label{l-Lie-type-bd}
	Let $H$ be the adjoint version of a finite group of Lie-type. Suppose that $H$ has a pure field automorphism $\f$ of coprime order. Then $|H| \leq B_3(|\f|,|C_H(\f)|)$
\end{lemma}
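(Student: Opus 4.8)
The plan is to compare $|H|$ with $|C_H(\f)|$ through the \emph{generic order polynomial} of the Lie type of $H$, and then to absorb every multiplicative constant into the deliberately enormous exponent $1000 \cdot d$. Set $e := |\f|$. By hypothesis $\f$ is a pure field automorphism, hence induced by a field automorphism $\f_K$ of the defining field $K$, and as in the proof of Proposition \ref{p-order} its order on $H$ equals the order $e$ of $\f_K$. Let $q$ denote the parameter of the base field of $H$ (the variable in which $|H|$ is a polynomial), and let $q_0 \ge 2$ be the corresponding parameter of the fixed subfield. By Steinberg's classical description of the centralizer of a field automorphism (see \cite{stein}, \cite{gls}), the fixed-point subgroup $C_H(\f)$ is a finite group of the \emph{same} Lie type as $H$, but over this subfield; the coprimeness of $\f$ is exactly what keeps this interaction with the twist trivial, and in particular gives the clean relation $q = q_0^{e}$.

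Next I would invoke the generic order polynomial $P(x) = x^{a}\prod_{i=1}^{s}(x^{b_i} - \epsilon_i) \in \Z[x]$ of this type, with $\epsilon_i \in \{\pm 1\}$, $b_i \ge 1$, and $\delta := a + \sum_i b_i = \deg P$. Since the order of a finite group of Lie type is independent of its isogeny type, the adjointness of $H$ is harmless and we have $|H| = P(q) = P(q_0^{e})$ together with $m := |C_H(\f)| = P(q_0)$. For $x \ge 2$ each factor obeys $\tfrac12 x^{b_i} \le x^{b_i} - \epsilon_i \le \tfrac32 x^{b_i}$, whence the two-sided estimate $2^{-s}x^{\delta} \le P(x) \le 2^{s}x^{\delta}$. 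A glance at the finite list of order polynomials yields the uniform inequality $\delta \ge 2s$, the tightest instance being the Suzuki groups with $(\delta,s) = (5,2)$ (for the untwisted types one even has $\delta = 2|\Phi^+| + \mathrm{rank} \ge 3s$).

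Finally I would assemble the bound. From $m = P(q_0) \ge 2^{-s}q_0^{\delta}$ I extract both $q_0^{\delta} \le 2^{s}m$ and, using $q_0 \ge 2$ and $s \le \delta/2$, the estimate $m \ge 2^{\delta - s} \ge 2^{\delta/2} \ge 2^{s}$; the latter gives $s \le \log_2 m$. Hence
$$ |H| = P(q_0^{e}) \le 2^{s}(q_0^{\delta})^{e} \le 2^{s}(2^{s}m)^{e} = 2^{s(1+e)}m^{e} \le 2^{2se}m^{e} \le m^{2e}\cdot m^{e} = m^{3e} = m^{3d}. $$
Since $m \ge 2^{s} \ge 1$ and $3d \le 1000\,d$, this yields $|H| \le m^{1000 d} \le m + m^{1000 d} = B_3(d,m)$, as desired; the superfluous summand $m$ in $B_3$ only weakens the requirement.

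The one genuinely delicate step is the identification in the first paragraph: showing \emph{uniformly}, across the twisted and especially the Suzuki--Ree families, that $C_H(\f)$ is the same-type group over the fixed subfield and that the parameters satisfy $q = q_0^{e}$. This is where the coprimeness of $\f$ does real work — it excludes any contribution of the twist and forces $\f$ to restrict to a field automorphism of the expected order — and where I would lean on Steinberg's fixed-point theorem. Everything downstream is elementary arithmetic carried out with a lavish constant, in keeping with the remark that no attempt has been made to optimise the bound.
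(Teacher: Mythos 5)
Your proposal is correct in substance and rests on the same core idea as the paper's proof: locate inside $C_H(\f)$ a group of the same (adjoint) type over the fixed subfield $K_0$ with $[K:K_0]=|\f|$, and then compare the two orders. The differences are in execution. First, you identify $C_H(\f)$ in one shot for a field automorphism of arbitrary coprime order, whereas the paper proceeds by induction on $k(|\f|)$, at each step applying \cite[Proposition 4.9.1]{gls} --- which is stated for field automorphisms of \emph{prime} order --- to $M:=O^{p'}(C_H(\f^{|\f|/p}))$ and then re-normalising $\f_{|_M}$ to a pure field automorphism via Lemmas \ref{l-coprime} and \ref{l-conjugate}. You correctly flag this identification as the delicate point and name the right ingredients (Steinberg's fixed-point theory plus oddness of $|\f|$ to preserve the twist), but be aware that the prime-order restriction in the standard reference is precisely why the paper inducts; if you want to avoid the induction you need a citation valid for arbitrary order. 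Second, your arithmetic via generic order polynomials is sharper than the paper's coarse rank estimates ($|K_0|^{n^2}\le|S|^4$ and $|H|\le|K|^{248n^2}$), yielding $|H|\le m^{3|\f|}$ instead of $m^{992|\f|}$; both are swallowed by the exponent $1000\,d$ in $B_3$. Two small points to tidy up: the order of ${}^3D_4(q^3)$ contains the factor $q^8+q^4+1$, which is not of your binomial form $x^{b_i}-\epsilon_i$ (though it still satisfies $\tfrac12 q^8\le q^8+q^4+1\le\tfrac32 q^8$, so your estimates survive); and $|C_H(\f)|$ need not equal $P(q_0)$ exactly --- it may differ by a diagonal/centre factor --- but only the lower bound $m\ge P(q_0)/c$ with $c$ small is needed, and the generous constant absorbs this.
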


\begin{proof}
	Let $n$ be the (untwisted) rank of $H$ and let $K$ be the defining field. We first claim that $C_H(\f)$ contains a subgroup $S$ that is isomorphic to a group of exactly the same adjoint type as $H$, but that is defined over a subfield $K_0$ of $K$ such that $|K| \leq |K_0|^{|\f|}$. Let us prove this claim by induction on the number $k = k(|\f|)$ of prime divisors of $|\f|$, counted with multiplicity. The base of the induction corresponds with $k = 0$, in which case $C_H(\f) = H$, so that there is nothing to prove. So we assume $k > 0$ and we let $p$ be a prime divisor of $|\f|$. According to \cite[Proposition 4.9.1 (a) and (b)]{gls}, $M := O^{p'}(C_H(\f^{|\f|/p}))$ is a group of the same Lie-type as $H$ and it is also adjoint, but $M$ is defined over the subfield $K_1$ of $K$ satisfying $|K| = |K_1|^p$. By construction, $\f(M) = M$ and the order of $\f$ on $M$ divides $|f|/p$. Since $\gcd(|M|,|\f_{|_M}|) = 1$, Lemma \ref{l-coprime} allows us to conclude that $\f$ is again a field automorphism of $M$. According to Lemma \ref{l-conjugate}, it now suffices to prove the claim under the additional assumption that $\f_{|_M}$ is a pure field automorphism of $M$. Induction then gives us a subgroup $S$ of $C_M(\f_{|_M}) \subseteq C_H(\f)$ of exactly the correct type over a subfield $K_0$ of $K_1$ satisfying $|K_1| \leq |K_0|^{|\f_{|_{M}}|} \leq |K_0|^{|\f|/p}$ and therefore $|K| \leq |K_0|^{|\f|}$. This establishes the claim. \\
	
	Some coarse estimates for adjoint groups of Lie-type will finish the proof. Since $S$ has rank $n$ and defining field $K_0$, we have {the (coarse) bound $|K_0|^{n^2} \leq |S|^4$} and therefore the bound $|K|^{n^2} \leq |K_0|^{n^2 \cdot |\f|} \leq |S|^{4 \cdot |\f|} \leq |C_H(\f)|^{4 \cdot |\f|}$. Since $H$ has rank $n$ and defining field $K$, we have {the (coarse) bound $|H| \leq |K|^{248 \cdot n^2}$} and therefore $|H| \leq |C_H(\f)|^{992 \cdot |\f|}$.
\end{proof}

\begin{remark}
	An upper bound on $|G|$ that does not require $|G|$ and $|\f|$ to be coprime can be found in Hartley's generalization {\cite[Theorem~A']{har}} of the classic Brauer---Fowler theorem \cite{brau-fowl}. That more generally applicable bound is, however, not explicit and it appears to grow rather quickly. On the other hand, particularly good bounds for involutions have recently been obtained by Guralnick---Robinson \cite{gur-rob}. Bounds using only elementary methods have also recently been obtained by Jabara \cite{jab21}.
\end{remark}

We can finally prove the main theorem for simple groups.

\begin{proposition} \label{c-simple}\label{p-simple}
	Let $H$ be a finite simple non-abelian group with a coprime automorphism $\f$, fixing at most $m$ points and satisfying a primitive identity of degree at most $d$. Then $|H| \leq B_3(d,m)$.
\end{proposition}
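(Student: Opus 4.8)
The plan is to assemble the preceding results of this section, since all the substantive work has already been done; this proposition is essentially a bookkeeping step. First I would dispose of the degenerate case $|\f| = 1$. Here $\f$ is the identity automorphism, so $C_H(\f) = H$ and hence $|H| = |C_H(\f)| \leq m$. Since $B_3(d,m) = m + m^{1000 \cdot d} \geq m$ by Definition \ref{d-b3}, the desired inequality $|H| \leq B_3(d,m)$ then holds trivially, and I may assume $|\f| > 1$ for the rest of the argument.

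Next, because $H$ is finite simple non-abelian and $\f$ is coprime of order $> 1$, Lemma \ref{l-coprime} identifies $H$ as the adjoint version of a group of Lie-type from \eqref{eq-untwist} or \eqref{eq-twist}, and exhibits $\f$ as a conjugate $\gamma \circ \psi \circ \gamma^{-1}$ of a pure field automorphism $\psi$ for some $\gamma \in \operatorname{Aut}(H)$. I would then invoke Lemma \ref{l-conjugate}: the pure field automorphism $\psi$ has the same order as $\f$, fixes at most $m$ points, and satisfies the same primitive identity $f(x)$ of degree at most $d$. This lets me replace $\f$ by $\psi$ and assume from the outset that $\f$ itself is a pure field automorphism, which is precisely the hypothesis needed by the two quantitative results below.

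The remaining steps are then to apply Proposition \ref{p-order} to the pure field automorphism $\f$ and its primitive identity $f(x)$, which yields $|\f| \leq \deg(f(x)) \leq d$, and to apply Lemma \ref{l-Lie-type-bd}, whose coprimeness hypothesis holds since $|\psi| = |\f|$ is coprime to $|H|$, giving $|H| \leq B_3(|\f|, |C_H(\f)|)$. It remains only to replace the arguments of $B_3$ by $d$ and $m$: we have $|\f| \leq d$ from the previous step and $|C_H(\f)| \leq m$ by hypothesis, while $1 \leq |C_H(\f)|$ since the identity element is always fixed. As $B_3(d,m) = m + m^{1000 \cdot d}$ is non-decreasing in each variable for $m \geq 1$, monotonicity gives $|H| \leq B_3(|\f|,|C_H(\f)|) \leq B_3(d,m)$, as required.

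Since every genuinely difficult ingredient has already been established, there is no real obstacle in this proof: the classification input sits in Lemma \ref{l-coprime}, the Vandermonde argument bounding $|\f|$ sits in Proposition \ref{p-order}, and the field-descent order estimate sits in Lemma \ref{l-Lie-type-bd}. The only points requiring any care are the trivial treatment of the case $|\f| = 1$ and the elementary monotonicity of $B_3$, and I would state both explicitly to keep the chain of inequalities self-contained.
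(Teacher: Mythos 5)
Your proposal is correct and follows essentially the same route as the paper: dispose of the case $|\f|=1$, use Lemma \ref{l-coprime} and Lemma \ref{l-conjugate} to reduce to a pure field automorphism, bound $|\f|\leq d$ via Proposition \ref{p-order}, and conclude with Lemma \ref{l-Lie-type-bd} together with the monotonicity of $B_3$. The only difference is that you make the monotonicity of $B_3$ explicit, which the paper leaves implicit.
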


\begin{proof}
	We may assume that $|\f| > 1$, since otherwise $|H| = |C_H(\f)| \leq m \leq B_3(d,m)$. According to Lemma \ref{l-coprime}, $H$ can then be found in the list \eqref{eq-untwist} or \eqref{eq-twist}, and $\f$ is a conjugate of a pure field automorphism. Lemma \ref{l-conjugate} therefore allows us to assume that $\f$ is a pure field automorphism. According to Proposition \ref{p-order}, this automorphism has order at most $d$. Lemma \ref{l-Lie-type-bd} therefore allows us to conclude that $|H| \leq B_3(|\f|,|C_H(\f)|) \leq B_3(d,m)$. 
\end{proof}

\section{The ``semi-simple'' case} \label{s-semi-simple}

We now consider the main theorem for groups modulo their soluble radical, i.e. the ``semi-simple'' case. We begin by recalling a well-known consequence of the classification of the finite simple groups.

\begin{theorem}[Rowley \cite{row}] \label{t-row}
	Let $G$ be a finite group with a fixed-point-free automorphism. Then $G$ is soluble.
\end{theorem}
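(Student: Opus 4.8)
The plan is to argue by contradiction: among all non-soluble finite groups admitting a fixed-point-free automorphism, let $G$ with automorphism $\f$ (so that $C_G(\f) = 1$) be one of minimal order. I expect two inputs to drive the argument. The elementary one is that fixed-point-freeness makes the map $G \longrightarrow G : g \mapsto g^{-1} \cdot \f(g)$ a bijection: it is injective because $g^{-1}\f(g) = h^{-1}\f(h)$ forces $hg^{-1} = \f(hg^{-1})$, hence $hg^{-1} = 1$, and an injective self-map of a finite set is onto. Its additive incarnation is that $1 - \f$ is an invertible endomorphism of any abelian group on which $\f$ acts without non-trivial fixed points. The deep input is the classification-based fact that \emph{no} finite non-abelian simple group admits a fixed-point-free automorphism.

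First I would establish the reduction step that fixed-point-freeness descends through abelian normal subgroups: if $M$ is a $\f$-invariant abelian normal subgroup of $G$, then $\f$ induces a fixed-point-free automorphism on $G/M$. Indeed, if $gM$ is fixed modulo $M$, then $c := g^{-1} \cdot \f(g)$ lies in $M$; parametrising the coset $gM$ by $M$ through $m \mapsto gm$, the action of $\f$ on $gM$ becomes the affine map $m \mapsto c \cdot \f(m)$ of $M$, so a genuine fixed point corresponds to a solution of $(1 - \f)(m) = c$ (written additively in $M$). As $\f$ is fixed-point-free on $M$, the endomorphism $1 - \f$ is invertible and such a solution exists; the resulting fixed point lies in $C_G(\f) = 1$, forcing $g \in M$. (This is the vanishing of $H^1(\langle \f \rangle, M)$.)

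With this in hand, I would pick a minimal $\f$-invariant normal subgroup $M$ of $G$, that is, a minimal normal subgroup of $\G := G \rtimes \langle \f \rangle$ contained in $G$; such an $M$ is a direct product $S_1 \times \cdots \times S_r$ of isomorphic simple groups permuted transitively by $\G$. If the $S_i$ are abelian, then $M$ is soluble and $G/M$ admits a fixed-point-free automorphism by the reduction step, hence is soluble by minimality of $G$, whence $G$ is soluble, a contradiction. So the $S_i$ are non-abelian. Choosing one $\f$-orbit of factors, say $S_1, \ldots, S_k$ with $\f(S_i) = S_{i+1}$ and $\f(S_k) = S_1$, a direct computation shows that the fixed points of $\f$ on $S_1 \times \cdots \times S_k$ are precisely the twisted-diagonal elements $(s, \f(s), \ldots, \f^{k-1}(s))$ with $s \in S_1$ fixed by the automorphism $\f^k|_{S_1}$ of the simple group $S_1$. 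Since every automorphism of a finite non-abelian simple group fixes a non-trivial element, $\f$ fixes a non-trivial element of $M \leq G$, contradicting $C_G(\f) = 1$.

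The main obstacle is precisely the last, classification-dependent input: that every automorphism of a finite non-abelian simple group has a non-trivial fixed point, equivalently that the theorem already holds for simple groups. Its proof is a case analysis over the alternating, sporadic, and Lie-type groups --- inner and diagonal automorphisms fix a centraliser, pure field automorphisms fix a subfield subgroup (as in Lemma \ref{l-coprime}), and graph automorphisms fix a smaller group of Lie type --- so all the real difficulty, and the entire reliance on the classification, is concentrated there, while the two reductions above are elementary and self-contained.
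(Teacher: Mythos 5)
The paper does not prove this statement at all: it is quoted verbatim as a known consequence of the classification and attributed to Rowley \cite{row}, so there is no internal argument to compare yours against. Your two reduction steps are correct and are indeed the standard route. The passage to quotients is fine: for a $\f$-invariant normal subgroup $M$ the map $M \longrightarrow M : m \mapsto m\cdot\f(m)^{-1}$ is injective (this needs only that $\f$ is fixed-point-free on $M$, not that $M$ is abelian, since $m\f(m)^{-1}=m'\f(m')^{-1}$ forces $m'^{-1}m=\f(m'^{-1}m)$), hence surjective, which is exactly the solvability of $c=m\f(m)^{-1}$ that you need; and your analysis of the fixed points on a $\f$-orbit $S_1\times\cdots\times S_k$ of non-abelian simple factors as twisted diagonals governed by $\f^k|_{S_1}$ is also correct.

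The genuine gap is the one you yourself flag: the claim that every automorphism of a finite non-abelian simple group has a non-trivial fixed point is asserted with only a one-line gesture at a case analysis, and that claim \emph{is} the theorem --- your two reductions convert Rowley's statement into it, but they do not prove it. The sketch you give for the Lie-type case is also not quite a proof as stated: an arbitrary automorphism is a product of inner, diagonal, field, and graph parts, and one cannot simply treat each type separately (a product of a field automorphism with an inner automorphism is not conjugate to a pure field automorphism in general, which is why Lemma \ref{l-coprime} needs the coprimality hypothesis to invoke Schur--Zassenhaus). Handling such mixed automorphisms, together with the sporadic and alternating cases, is precisely the classification-dependent content of \cite{row}, so your proposal should be read as a correct reduction of the theorem to its known hard kernel rather than as a proof.
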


We introduce minor variations on the auxiliary polynomials of previous papers.

\begin{definition}[Cf. \cite{moe0,moe1,moe2}]
	Let $f(x) = a_0 + a_1 \cdot x + \cdots + a_d \cdot x^d \in \Z[x]$. For each positive integer $n$ and each nonnegative integer $j\leq n-1$, we define the partial sums $$f_{n,j}(x^n) \cdot x^j := \sum_{i \equiv j \operatorname{mod} n} a_i \cdot x^i,$$ so that $f(x) = f_{n,0}(x^n) + f_{n,1}(x^n) \cdot x + \cdots + f_{n,n-1}(x^n) \cdot x^{n-1}$.
\end{definition}

\begin{remark} \label{r-coprime}
	Let $f(x) \in \Z[x] \setminus \{0\}$ and $n \geq 1$. If $f(x)$ is primitive, then there is some $0 \leq j \leq n-1$ such that also $f_{n,j}(x)$ is primitive.
\end{remark} 

\begin{definition} \label{d-b2}
	We recursively define 
	$$B_2 : \Z_{\geq 0} \times \Z_{\geq 1} \longrightarrow \Z_{\geq 1} : (d,m) \mapsto 
	\begin{cases}
		1 & \text{ if } m = 1, \\
		B_3(d,m)^{m \cdot d} ! \cdot B_2(d, \lfloor m/2 \rfloor) & \text{ if } m > 1.
	\end{cases}
	$$
\end{definition}

We are now in a position to prove the second claim of our main theorem.

\begin{proposition} \label{p-index}
	Let $G$ be a finite group with a coprime automorphism $\f$ fixing at most $m$ points and satisfying the primitive \emph{ordered} identity $f(x)$. Then $|G/R(G)| \leq B_2(\deg(f(x)),m)$.
\end{proposition}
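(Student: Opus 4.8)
The plan is to reduce to the case where the soluble radical is trivial and then bound the order of the resulting \emph{semisimple} group through its socle. First I would pass to $\bar{G} := G/R(G)$. Since $R(G)$ is characteristic it is $\f$-invariant, so $\f$ induces a coprime automorphism $\bar{\f}$ of $\bar{G}$ that still satisfies the ordered identity $f(x)$; by coprimeness $C_{\bar{G}}(\bar{\f}) = C_G(\f) R(G)/R(G)$ has order at most $m$, and $R(\bar{G}) = 1$. As $|G/R(G)| = |\bar{G}|$, it suffices to bound $|\bar{G}|$, so I may assume outright that $R(G) = 1$. Then $\operatorname{Soc}(G) = S_1 \times \cdots \times S_t$ is a direct product of non-abelian simple groups and $C_G(\operatorname{Soc}(G)) = 1$ (any nontrivial centralizer would contain a minimal normal subgroup, forcing it to meet the centerless socle nontrivially). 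Hence $G$ embeds into $\operatorname{Aut}(\operatorname{Soc}(G))$, which acts faithfully on the underlying set, so that $|G| \leq |\operatorname{Soc}(G)|! \leq \big(\max_i |S_i|\big)^t\,!$. Everything now rests on bounding each simple factor and the number $t$ of factors.

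For the factors I would let $\f$ permute the $S_i$ and fix an orbit of length $\ell$ with factors $S_{i_0} \to S_{i_1} \to \cdots$. Since distinct direct factors commute, projecting the ordered identity $\prod_k \f^k(g)^{a_k} = 1$ onto the factor $S_{i_j}$ leaves exactly the terms with $k \equiv j \pmod{\ell}$; writing $k = j + \ell q$ and putting $h := \f^j(g)$, this reads $\prod_q (\f^\ell|_{S_{i_j}})^q(h)^{a_{j+\ell q}} = 1$, i.e. $\f^\ell|_{S_{i_j}}$ satisfies the partial-sum ordered identity $f_{\ell,j}(x)$. By Remark \ref{r-coprime} some $f_{\ell,j_0}(x)$ is primitive, and it has degree at most $d$. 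Moreover $\f^\ell|_{S_{i_{j_0}}}$ is a coprime automorphism of a non-abelian simple group fixing at most $|C_G(\f)| \leq m$ points, because $C_{S_{i_{j_0}}}(\f^\ell) \cong C_{P_O}(\f) \leq C_G(\f)$ for the invariant subgroup $P_O := S_{i_0} \times \cdots \times S_{i_{\ell-1}}$. Proposition \ref{p-simple} then yields $|S_i| = |S_{i_{j_0}}| \leq B_3(d,m)$ for every factor.

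To bound $t$ I would control both the number of $\f$-orbits on $\{S_1,\ldots,S_t\}$ and their lengths. For the number of orbits: each orbit gives $C_{P_O}(\f) \cong C_S(\f^\ell)$, which is nontrivial by Rowley's theorem (Theorem \ref{t-row}), since otherwise $\f^\ell$ would be a fixed-point-free automorphism of the non-soluble group $S$. As $C_{\operatorname{Soc}(G)}(\f) = \prod_O C_{P_O}(\f)$ has order at most $m$, the number of orbits is at most $\log_2 m$. For the length: primitivity forces $\ell \leq d$. Indeed, if $\ell > d$ then the indices $(-k) \bmod \ell$ for $k = 0,\ldots,d$ are distinct, so in the projection onto $S_{i_0}$ the corresponding variables are independent; setting all but one to the identity gives $\exp(S_{i_0}) \mid a_k$ for every $k$, whence $\exp(S_{i_0}) \mid \gcd(a_0,\ldots,a_d) = 1$, which is absurd. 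Combining, $t \leq d \log_2 m \leq md$, so $|\operatorname{Soc}(G)| \leq B_3(d,m)^{md}$ and $|G| \leq \big(B_3(d,m)^{md}\big)!\leq B_2(d,m)$, the last inequality being immediate from Definition \ref{d-b2} since $B_2(d,\lfloor m/2 \rfloor) \geq 1$.

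The main obstacle is the bound on the number $t$ of simple factors, and within it the length bound $\ell \leq d$: it is precisely here that the \emph{primitivity} of $f(x)$ (rather than mere non-vanishing modulo the relevant primes) is indispensable, and it is the delicate point that makes the partial-sum polynomials $f_{\ell,j}$ do their work. The base case $m = 1$ of the recursion in Definition \ref{d-b2} is nothing but Rowley's theorem, which forces $G$ to be soluble, so that $G/R(G)$ is trivial and $B_2(d,1) = 1$ indeed suffices.
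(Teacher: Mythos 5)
Your proposal is correct, and its core coincides with the paper's own argument: the orbit analysis on the simple factors, the passage to the partial-sum polynomials $f_{\ell,j}$, the use of Remark \ref{r-coprime} to find a primitive one, Rowley's theorem (Theorem \ref{t-row}) to make each orbit contribute a non-trivial fixed point, and primitivity to force the orbit length to be at most $d$ before invoking Proposition \ref{p-simple}. Where you genuinely diverge is in the reduction from an arbitrary group with trivial soluble radical to products of simple groups. The paper proves the order bound only for a characteristically simple group $G = H_1 \times \cdots \times H_k$ and then handles the general case by induction on $m$: it considers the centralizers $C_G(S)$ of the characteristic, characteristically-simple, non-abelian sections $S$ (each of index at most $B!$, with trivial intersection), picks one member $L$ missing a fixed point so that $|C_L(\f)| \leq \lfloor m/2 \rfloor$, and applies the induction hypothesis to $L$; this recursion is exactly what produces the factor $B_2(d,\lfloor m/2 \rfloor)$ in Definition \ref{d-b2}. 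You instead run the orbit analysis directly on $\operatorname{Soc}(G)$ and use the standard embedding $G \hookrightarrow \operatorname{Aut}(\operatorname{Soc}(G)) \hookrightarrow \operatorname{Sym}(\operatorname{Soc}(G))$, valid because $R(G)=1$ forces $C_G(\operatorname{Soc}(G))=1$. This avoids the induction on $m$ entirely and in fact yields the slightly sharper bound $|G/R(G)| \leq \bigl(B_3(d,m)^{md}\bigr)!$, which is indeed at most $B_2(d,m)$ as you observe (with the case $m=1$ disposed of by Rowley, exactly as in the paper). Two cosmetic points: the residues in your length bound should be $k \bmod \ell$ rather than $(-k) \bmod \ell$ (immaterial, since only their distinctness for $k=0,\dots,d$ when $\ell > d$ is used), and your $\log_2 m$ count of orbits is finer than the paper's bound $l \leq m$ but not needed for the stated estimate.
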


We consider Hartley's generalized Brauer---Fowler theorem \cite[Theorem~A]{har} and we generalize its proof in a straightforward way.

\begin{proof} 
	Suppose first that there is a simple, non-abelian group $H$ and a non-negative integer $k$ such that $G = H_1 \times \cdots \times H_k$ and such that each $H_i$ is isomorphic to $H$. The automorphism induced on $G$ permutes these factors $H_i$, so that $G$ breaks up into orbits $T_1, \ldots , T_l$. On each such orbit, the induced automorphism must have a non-trivial fixed-point by Theorem \ref{t-row}. So the number of orbits $l$ satisfies $l \leq |C_G(\f)| \leq m$. \\
	
	Now consider any one of these orbits, say $T_i$, and let $n_i$ be the number of simple factors of $T_i$. Then $n_i \geq 1$ and each factor is isomorphic to $H$. After re-labeling, we may assume that $T_i = H_0 \times \cdots \times H_{n_i - 1}$ and that $\f$ cyclically permutes these simple factors: $\f(H_j) = H_{j \operatorname{mod} l_i}$. Let $f(x)$ be given by $a_0 + a_1 \cdot x + \cdots + a_d \cdot x^d$, with $a_d \neq 0$. Suppose first that $d \leq n_i -1$. For each $g \in H_1$, we have by assumption the equality $1 = g^{a_0} \cdot \f(g^{a_1}) \cdots \f^d(g^{a_d})$. Since the elements $g^{a_0} , \f(g)^{a_1} , \ldots , \f^d(g)^{a_d}$ belong to different $H_j$, we have $g^{a_0} = \cdots = g^{a_d} = 1$, so that $g = g^{\operatorname{cont}(f(x))} = 1$. We conclude that $H = \{1\}$. This contradiction shows that each orbit $T_i$ has at most $d$ simple factors: $n_i \leq d$. We conclude therefore that the number $k$ of simple factors of $G$ satisfies $k = n_1 + \cdots + n_l \leq m \cdot d$. \\
	
	As in the previous paragraph, we observe that $\f^j(g)^{a_j} \in H_{j \operatorname{mod} n_i}$, for all $g \in H_0$ and all $j \in \{0,1,\ldots,n_i - 1 \}$. So we may conclude that each $f_{n_i,j}(x)$ is an ordered identity of the automorphism induced by $\f^{n_i}$ on $H_0$. According to Remark \ref{r-coprime}, we can select some $j \in \{0,1,\ldots,n_i -1\}$ such that $f_{n_i,j}(x)$ is primitive. One can further verify that $|C_{H_0}(\f_{|_{H_0}}^{n_i})| \leq |C_G(\f)| \leq m$. So the assumptions of Proposition \ref{p-simple} are satisfied for the simple group $H_0$, the automorphism $\f_{|_{H_0}}^{n_i}$, and the identity $f_{n_i,j}(x)$. We may therefore conclude that $|H| = |H_0| \leq B_3(d,m)$. Altogether, we obtain the bound $|G| = |H|^k \leq B_3(d,m)^{m \cdot d} =: B$, and therefore the coarse bound $|\operatorname{Aut}(G)| \leq B!$. \\
	
	We finally consider the general case, which we prove by induction on $m = |C_G(\f)|$. By passing from $G$ to $G/R(G)$, we may assume that $R(G) = \{1\}$. Define $N:= \bigcap_{S} C_G(S)$, where $S$ runs over the characteristic, characteristically-simple, non-abelian sections of $G$. Then $N$ is a normal soluble subgroup of $G$ and therefore the trivial group. By the above, every characteristic section $S$ of $G$ that is characteristically-simple but not abelian satisfies $|G/C_G(S)| \leq B!$. So we obtain a family $\Lambda$ of normal, $\f$-invariant subgroups of $G$ of index at most $B!$ and with trivial intersection. If $m = 1$, then Theorem \ref{t-row} implies $|G| = 1 \leq B_2(d,m)$. So we may assume that $m > 1$. Then there is some $L \in \Lambda$ that does not contain all the fixed-points of $\f$ in $G$. In this case, we have $|C_L(\f)| \leq \lfloor m/2 \rfloor$, so that we may apply the induction hypothesis to $L$, $\f_{|_L}$, and $f(x)$ in order to obtain $|L/R(L)| \leq B_2(d,\lfloor m/2 \rfloor)$. So we have $[G:R(L)] \leq [G:L] \cdot [L:R(L)] \leq B! \cdot B_2(d, \lfloor m/2 \rfloor ) = B_2(d,m)$. Since the soluble radical $R(L)$ of $L$ is characteristic in $L$, it is a soluble normal subgroup of $G$, and therefore trivial. So we may indeed conclude that $|G| = [G:R(L)] \leq B_2(d,m)$.
\end{proof}

\begin{remark} \label{r-ordered-nordered}
	This proof uses the fact that $f(x)$ is an \emph{ordered} identity of the automorphism. The proof can also be made to work (in the obvious way) for identities $\{ \f^{m_0}(g)^{b_0} \cdots \f^{m_k}(g)^{b_k} | g \in G \} = \{1\}$ that are not necessarily ordered, at the cost of replacing the invariant $\deg(f(x)) = \deg( b_0 \cdot x^{m_0} + \cdots + b_k \cdot x^{m_k} )$ with the possibly larger invariant $\max \{m_0 , \ldots , m_k \}$.
\end{remark}

\section{Proof of the main theorem} \label{s-main-theorem-proof}

We can now prove the main theorem with the bounds of Definitions \ref{d-b1} and \ref{d-b2}.

\begin{theorem} 
	Let $G$ be a finite group with a coprime automorphism $\f$ fixing $m$ elements and satisfying an ordered identity that is primitive and of degree at most $d$. Then the soluble radical $R$ of $G$ satisfies $$h(R) \leq B_1(d,h(C_G(\f))) \leq B_1(d,m) \text{ and } |G/R| \leq B_2(d,m).$$
\end{theorem}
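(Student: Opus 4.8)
The plan is to assemble the theorem directly from the two intermediate results already established: Proposition~\ref{p-soluble} supplies the Fitting-height bound and Proposition~\ref{p-index} supplies the index bound. Write $R := R(G)$ for the soluble radical. The index claim is essentially immediate: Proposition~\ref{p-index} applies verbatim to $G$, $\f$, and the given primitive ordered identity, and yields $|G/R| \le B_2(\deg(f(x)),m)$. Since $\deg(f(x)) \le d$ and $B_2$ is non-decreasing in its first argument (one checks from Definition~\ref{d-b2} that $B_3(d,m) = m + m^{1000\cdot d}$, and hence $B_2(d,m)$, grows with $d$), this upgrades to $|G/R| \le B_2(d,m)$.

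For the Fitting height the first step is to restrict $\f$ to $R$. As $R$ is characteristic in $G$, the restriction $\f|_R$ is an automorphism of the soluble group $R$; its order divides $|\f|$ while $|R|$ divides $|G|$, so coprimeness of $\f$ forces the order of $\f|_R$ to be coprime to $|R|$, i.e.\ $\f|_R$ is again a coprime automorphism. Moreover the ordered identity $f(x)$ holds for every $g \in G$, in particular for every $g \in R$, so $f(x)$ remains a primitive identity of degree at most $d$ for $\f|_R$. Applying Proposition~\ref{p-soluble} to the pair $(R,\f|_R)$ then gives $h(R) \le B_1(d, h(C_R(\f)))$, where $C_R(\f) = C_G(\f) \cap R$ is a subgroup of the soluble group $R$, hence itself soluble, so that $h(C_R(\f))$ is unambiguously defined.

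The second step passes from $C_R(\f)$ up to $C_G(\f)$ and down to $m$. Here I would use the elementary fact that Fitting height does not increase on passing to a subgroup of a soluble group: if $H \le L$ with $L$ soluble, then intersecting the Fitting series of $L$ with $H$ exhibits $H$ as an iterated extension of at most $h(L)$ nilpotent sections, whence $h(H) \le h(L)$. Since $C_R(\f) \le C_G(\f)$, this gives $h(C_R(\f)) \le h(C_G(\f))$ whenever the latter is defined, and since $B_1(d,m') = 8d + m' + 2$ is increasing in $m'$ (Definition~\ref{d-b1}), we obtain the first displayed inequality $h(R) \le B_1(d, h(C_G(\f)))$. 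For the robust second inequality I would instead route through the soluble group $C_R(\f)$ and bound its Fitting height by its composition length, and hence by its order: $h(C_R(\f)) \le \log_2|C_R(\f)| \le \log_2 m \le m$, so that $B_1(d,h(C_R(\f))) \le B_1(d,m)$ and therefore $h(R) \le B_1(d,m)$.

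No individual step is a genuine obstacle; the theorem is a synthesis, so the real work is bookkeeping together with a few points deserving care. One must verify that $\f|_R$ inherits both coprimeness and the identity; one must justify the subgroup-monotonicity of Fitting height and, with it, the fact that $C_R(\f)$ is soluble so that the relevant Fitting heights are meaningful (note that $C_G(\f)$ itself need not be soluble in general, so the first displayed inequality is to be read under the convention that $h(C_G(\f))$ is defined, whereas the robust bound $h(R) \le B_1(d,m)$ passes safely through $C_R(\f)$); and one must check the monotonicity of $B_1$ and $B_2$ in the arguments used to replace $\deg(f(x))$ and $h(C_G(\f))$ by the clean quantities $d$ and $m$.
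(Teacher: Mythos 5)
Your proposal is correct and follows essentially the same route as the paper: restrict $\f$ to the characteristic subgroup $R$, apply Proposition~\ref{p-soluble} to $(R,\f|_R)$ using $C_R(\f)=C_G(\f)\cap R\subseteq C_G(\f)$ for the Fitting-height bound, and invoke Proposition~\ref{p-index} directly for the index bound. The extra care you take with the monotonicity of $B_1$, $B_2$ and with the solubility of $C_R(\f)$ is sound bookkeeping that the paper leaves implicit.
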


\begin{proof}
	Let $\f_{|_R}$ be the restriction of $\f$ to $R$. Then $\f_{|_R}$ satisfies the same ordered identity and $C_R(\f_{|_R}) = C_G(\f) \cap R \subseteq C_G(\f)$. We may therefore apply Proposition \ref{p-soluble} in order to obtain the bound $h(R) \leq B_1(d,h(C_R(\f_{|_R}))) \leq B_1(d,h(C_G(\f))) \leq B_1(d,m)$. According to Proposition \ref{p-index}, we also have the bound $|G/R| \leq B_2(d,m)$.
\end{proof}

In view of Remark \ref{r-ordered-nordered}, we also obtain the following analogue of the main theorem for identities that are not necessarily ordered. Let $m_0,\ldots,m_k \in \Z_{\geq 0}$, let $b_0,\ldots,b_k \in \Z$, and define the polynomial $f(x) := b_0 \cdot x^{m_0} + b_1 \cdot x^{m_1} + \cdots + b_k \cdot x^{m_k} \in \Z[x]$.

\begin{theorem} \label{t1b}
	Let $G$ be a finite group with a coprime automorphism $\f$ fixing $m$ elements and satisfying the primitive identity $$ \f^{m_0}(g)^{b_0} \cdot \f^{m_1}(g)^{b_1} \cdots \f^{m_k}(g)^{b_k} = 1,$$ for all $g \in G$. Then the soluble radical $R$ of $G$ satisfies $h(R) \leq B_1(d,h(C_G(\f))) \leq B_1(d,m)$ and $|G/R| \leq B_2(d,m)$, where $d:= \max \{m_0, \ldots , m_k\}$. 
\end{theorem}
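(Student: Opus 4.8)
The plan is to re-run the proof of the main theorem, checking at each step that the only feature of an \emph{ordered} identity that was genuinely used can be replaced by the coarser invariant $d := \max\{m_0,\ldots,m_k\}$, exactly as anticipated in Remark \ref{r-ordered-nordered}.

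First I would dispose of the Fitting-height estimate, which requires no new idea. The automorphism $\f$ restricts to a coprime automorphism $\f_{|_R}$ of the soluble radical $R$ that satisfies the same primitive identity, and the associated polynomial $b_0 \cdot x^{m_0} + \cdots + b_k \cdot x^{m_k}$ has degree at most $d$. Since Proposition \ref{p-soluble} is already stated for arbitrary primitive identities (not just ordered ones) of bounded degree, it applies verbatim and yields $h(R) \leq B_1(d,h(C_R(\f_{|_R}))) \leq B_1(d,h(C_G(\f))) \leq B_1(d,m)$.

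For the index bound I would imitate the proof of Proposition \ref{p-index} line by line. The reduction to the case $R(G) = \{1\}$, the orbit decomposition of the simple factors under $\f$, the estimate $l \leq |C_G(\f)| \leq m$ for the number of orbits via Rowley's theorem (Theorem \ref{t-row}), and the concluding induction on $m$ are all insensitive to the order in which the powers of $\f$ appear, so they transcribe unchanged. Only two steps invoke the ordered structure. The first is the bound $n_i \leq d$ on the number of simple factors in an orbit $T_i = H_0 \times \cdots \times H_{n_i - 1}$. Here I would argue that if $n_i > d$, then every exponent satisfies $m_l < n_i$, so that $m_l \bmod n_i = m_l$ and the factor $\f^{m_l}(g)^{b_l}$ lands in $H_{m_l}$; collecting the contributions with a common exponent value $m$ and using that distinct such values are incongruent modulo $n_i$, hence lie in distinct direct factors, forces $\f^m(g)^{c_m} = 1$, equivalently $g^{c_m} = 1$, for each $m$, where $c_m := \sum_{m_l = m} b_l$ is the corresponding collected coefficient of $f$. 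Since $\gcd_m c_m$ is the content of $f$, which is $1$ by primitivity, we get $g = 1$ for every $g \in H_0$, contradicting the non-triviality of $H$. Hence $n_i \leq d$, so $k = \sum_i n_i \leq md$ and $|G| = |H|^k \leq B_3(d,m)^{md}$.

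The second step is the passage to a partial identity on a single simple factor. Setting $\psi := \f^{n_i}$, which stabilises $H_0$ and is a coprime automorphism of it, I would group the terms of the identity by the residue $j$ of $m_l$ modulo $n_i$; since distinct residues correspond to distinct direct factors, projecting the full identity onto each $H_j$ shows that each residue class is itself an identity, and after applying the isomorphism $\f^{-j} \colon H_j \to H_0$ it becomes $\prod_{m_l \equiv j} \psi^{t_l}(g)^{b_l} = 1$ on $H_0$, with $t_l := (m_l - j)/n_i \leq d/n_i \leq d$. This is exactly the (possibly unordered) partial polynomial $f_{n_i,j}$, of degree at most $d$, and Remark \ref{r-coprime} still provides a residue $j$ for which $f_{n_i,j}$ is primitive. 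As $|C_{H_0}(\psi)| \leq |C_G(\f)| \leq m$, Proposition \ref{p-simple} applies to $H_0$, $\psi$, and $f_{n_i,j}$ and gives $|H| \leq B_3(d,m)$; here it is essential that Proposition \ref{p-simple}, through Proposition \ref{p-order}, already treats general (non-ordered) identities. Substituting this into the otherwise unchanged remainder of the proof of Proposition \ref{p-index} yields $|G/R| \leq B_2(d,m)$. I expect the first step to be the only real obstacle: once one checks that $n_i > d$ forces the surviving exponents to be genuinely distinct modulo $n_i$ (after combining equal ones, an operation that leaves the content untouched), the ordered and unordered arguments coincide, and the rest is routine bookkeeping.
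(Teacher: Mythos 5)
Your proposal is correct and follows exactly the route the paper itself takes: the paper proves Theorem \ref{t1b} simply by invoking Remark \ref{r-ordered-nordered} (noting that Proposition \ref{p-soluble} already covers unordered identities and that the proof of Proposition \ref{p-index} adapts "in the obvious way" with $\deg f$ replaced by $\max\{m_0,\ldots,m_k\}$), and the two places you identify --- the bound $n_i\leq d$ via collecting coefficients with equal exponents, and the projection onto residue classes modulo $n_i$ to extract a primitive $f_{n_i,j}$ --- are precisely the details the paper leaves implicit. Your verification that collecting equal exponents preserves the content, and that distinct exponents below $n_i$ land in distinct direct factors, is exactly what makes the adaptation go through.
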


Both theorems are generally false for polynomials with non-trivial content. 

\begin{example} \label{ex-prim}
	Let $S$ be a finite simple non-abelian group and let $n \in \Z_{\geq 1}$. Then each $G_n := S \times \cdots \times S$ (with $n$ factors) admits an automorphism $\f : G_n \longrightarrow G_n : (g_1,g_2,\ldots,g_n) \mapsto (g_n,g_1,\ldots,g_{n-1})$ that fixes exactly $|S|$ elements and that satisfies the constant ordered identity $f(x) := |S|$. But $\lim_{n \rightarrow + \infty} |G_n /R(G_n)| = \lim_{n \rightarrow + \infty} |S|^n = + \infty$. 
\end{example}

\section*{Acknowledgements}

The research was supported by the Austrian Science Fund (FWF) Projects: P 30842--N35 and I 3248--N35. The author would also like to thank E. Khukhro for his feedback on an early draft and R. Lyons for his help with the references in Section \ref{s-simple}.

\end{document}